\documentclass[11pt]{amsart}

\usepackage[top=1.5in, bottom=1in, left=0.9in, right=0.9in]{geometry}

\usepackage{amscd,amsmath,amssymb,fancyhdr,color}
\usepackage[utf8]{inputenc}
\usepackage{amsfonts}
\usepackage{amsthm}
\usepackage{accents}
\usepackage{graphicx}
\usepackage{float}
\usepackage{subcaption}
\usepackage{verbatim}
\usepackage{indentfirst}
\usepackage{dsfont}
\usepackage{tikz}
\usepackage{tikz-cd}
\usetikzlibrary{matrix}
\usepackage[all]{xy}
\usepackage{enumerate}
\usepackage{extarrows}
\usepackage{csquotes}

\usepackage[foot]{amsaddr}

\usepackage{scalerel,stackengine}
\stackMath
\newcommand\reallywidehat[1]{%
	\savestack{\tmpbox}{\stretchto{%
			\scaleto{%
				\scalerel*[\widthof{\ensuremath{#1}}]{\kern-.6pt\bigwedge\kern-.6pt}%
				{\rule[-\textheight/2]{1ex}{\textheight}}
			}{\textheight}%
		}{0.5ex}}%
	\stackon[1pt]{#1}{\tmpbox}%
}

\usepackage{faktor}

\usepackage{BOONDOX-uprscr}

\usepackage[backref=page]{hyperref}
\renewcommand*{\backref}[1]{}
\renewcommand*{\backrefalt}[4]{%
	\ifcase #1 (Not cited.)%
	\or        (Cited on page~#2.)%
	\else      (Cited on pages~#2.)%
	\fi}

\hypersetup{
	colorlinks   = true,
	citecolor    = magenta
}

\newcommand{\K}{K\"ahler}

\numberwithin{equation}{section}

\def\eqref#1{(\ref{#1})}

\newcommand{\Z}{{\mathbb Z}}
\newcommand{\C}{{\mathbb C}}
\newcommand{\R}{{\mathbb R}}
\newcommand{\Q}{{\mathbb Q}}
\renewcommand{\H}{{\mathbb H}}
\newcommand{\6}{\partial}

\newcommand{\del}{\partial}
\newcommand{\delb}{\overline{\partial}}

\def\1{\sqrt{-1}\:}

\newcommand{\cntrct}                
{\hspace{2pt}\raisebox{1pt}{\text{$\lrcorner$}}\hspace{2pt}}
\newcommand{\arrow}{{\:\longrightarrow\:}}



\renewcommand{\Im}{\operatorname{Im}}


\newcommand{\ie}{{\em i.e. }}
\newcommand{\eg}{{\em e.g., }}

\renewcommand{\to}{\longrightarrow}





\newcounter{Mycounter}[section]
\newcounter{lemma}[section]
\setcounter{lemma}{0}

\newcounter{claim}[section]
\setcounter{claim}{0}

\newcounter{sublemma}[section]
\setcounter{sublemma}{0}

\newcounter{corollary}[section]
\setcounter{corollary}{0}

\newcounter{theorem}[section]
\setcounter{theorem}{0}

\newcounter{conjecture}[section]
\setcounter{conjecture}{0}

\newcounter{proposition}[section]
\setcounter{proposition}{0}

\newcounter{definition}[section]
\setcounter{definition}{0}

\newcounter{example}[section]
\setcounter{example}{0}

\newcounter{remark}[section]
\setcounter{remark}{0}

\newcounter{problem}[section]
\setcounter{problem}{0}

\newcounter{question}[section]
\setcounter{question}{0}

\makeatletter

\@addtoreset{equation}{section}

\@addtoreset{footnote}{section}

\makeatother

\makeatletter
\DeclareRobustCommand*{\mfaktor}[3][]
{
	{ \mathpalette{\mfaktor@impl@}{{#1}{#2}{#3}} }
}
\newcommand*{\mfaktor@impl@}[2]{\mfaktor@impl#1#2}
\newcommand*{\mfaktor@impl}[4]{
	\settoheight{\faktor@zaehlerhoehe}{\ensuremath{#1#2{#3}}}%
	\settoheight{\faktor@nennerhoehe}{\ensuremath{#1#2{#4}}}%
	\raisebox{-0.5\faktor@zaehlerhoehe}{\ensuremath{#1#2{#3}}}%
	\mkern-4mu\diagdown\mkern-5mu%
	\raisebox{0.5\faktor@nennerhoehe}{\ensuremath{#1#2{#4}}}%
}
\makeatother

\usetikzlibrary{arrows,chains,matrix,positioning,scopes}

\makeatletter
\tikzset{join/.code=\tikzset{after node path={%
			\ifx\tikzchainprevious\pgfutil@empty\else(\tikzchainprevious)%
			edge[every join]#1(\tikzchaincurrent)\fi}}}
\makeatother

\tikzset{>=stealth',every on chain/.append style={join},
	every join/.style={->}}

\makeatletter
\newtheorem*{rep@theorem}{\rep@title}
\newcommand{\newreptheorem}[2]{%
	\newenvironment{rep#1}[1]{%
		\def\rep@title{\ref{##1}}%
		\begin{rep@theorem}}%
		{\end{rep@theorem}}}
\makeatother

\newreptheorem{theorem}{Theorem}

\setcounter{tocdepth}{1}

\newtheoremstyle{named}{}{}{\itshape}{}{\bfseries}{.}{.5em}{\thmnote{#3's }#1}
\theoremstyle{named}


\begin{document}
	
	\begin{center}
		{\Large\bf  Special non-K\"ahler metrics - old and new}\\[5mm]
		{\large
			Liviu Ornea\footnote{Both authors are partially supported by the PNRR-III-C9-2023-I8 grant CF 149/31.07.2023 Conformal
				Aspects of Geometry and Dynamics.\\[1mm]
				\noindent{\bf Keywords: Hermitian metric, balanced, locally conformally \K, pluriclosed, astheno-\K}\\[1mm] 
				\noindent {\bf 2020 Mathematics Subject Classification: 53C55, 22E25, 32J18.} {}}, 
		Miron Stanciu\footnotemark[\value{footnote}]
		}
		
	\hfill
			
\begin{center}
	{\em Dedicated to Vasile Br\^inz\u anescu on the occasion of his 80th birthday}
\end{center}		

\hfill

	\end{center}

	\hfill
	
	{\small
		\hspace{0.15\linewidth}
		\begin{minipage}[t]{0.7\linewidth}
			{\bf Abstract.} We give an account of old and new results concerning many types of non-K\"ahler metrics, with focus on the problem of their coexistence on compact complex manifolds, and their behaviour at deformations and blow-up. We also describe a mechanism that several authors have used to construct examples of nilmanifolds admitting metrics with certain properties. \\
			
		\end{minipage} 
	}
	
	\tableofcontents
	
	\section{Introduction}
	The non-K\"ahler world on compact complex manifolds seems discouragingly vast. One of the first attempts to classify the Hermitian metrics appeared in \cite{gh80}, according to the irreducible components of the action of $\mathrm{SU}(n)$ on the space of tensors having the same symmetries as $\nabla\omega$, where $\nabla$ is the Levi-Civita connection and $\omega$ the fundamental two-form. For example, balanced metrics appeared in the class $W_2$ (there called semi-K\"ahler), and LCK metrics in the class $W_4$. 
	
	Gradually, other types on non-K\"ahler metrics appeared, which do not enter in the Gray-Hervella's classification. These were mainly defined according to cohomological criteria, related to the Aeppli and Bott--Chern cohomologies, which are relevant on manifolds not satisfying the global  $\partial\bar\partial$-lemma. Such are, for example, the Gauduchon and  the pluriclosed metrics.  In some cases, such as the pluriclosed and Hermitian-symplectic ones, the impetus is also related to theoretical physics.
	
	Of course, technically, all the non-K\"ahler metrics of interest up to now are defined by weakening the K\"ahler condition.
	
	In this note, we survey some old and new results about what we consider to be the most interesting non-K\"ahler metrics, which we briefly describe in Section 2 and afterwards, in Section 3, give a general algebraic construction that produces most of them in a particular setting. In Section 4 we focus on the problem of coexistence of different types of metrics on the same compact complex manifold, while in Section 5 we list several results concerning the behaviour of these metrics with respect to blow-up and deformations.
	
	Let us mention that another way of defining Hermitian non-K\"ahler metrics is by imposing different curvature conditions, {\em e.g.} ``K\"ahler-like'' ones. While this is a very active research area, we will not touch it in this survey.
	
	\section{A wealth of Hermitian metrics}
	
Let $(M,J)$ be a connected,  complex manifold of
complex dimension $n\geq 2$. Usually, we shall consider $M$ to be compact.
	
For a Hermitian metric $g$,
we shall denote $\omega(\cdot,\cdot):=g(J\cdot,\cdot)$ its
fundamental two-form. 
	
Instead of working with the complex operator $\bar\6$, we shall prefer $d^c:=\1(\6-\bar\6)=-J^{-1}dJ$, where the complex structure acts
on differential forms of degree $k$ by
$J(\eta)(x_1,\ldots,x_k)=(-1)^k\eta(Jx_1,\ldots,Jx_k)$, or, equivalently, $J \alpha = i^{q-p} \alpha$ for any $(p, q)$-form $\alpha$. Sometimes we shall write $\eta^c$ for $J(\eta)$.

\smallskip

In discussing certain types of Hermitian metrics, we shall make frequent use of the following easily-proved result:

\begin{proposition}\label{lee_form}
	For any Hermitian metric $\omega$, there exists a unique $1$-form $\theta$ (called the Lee form of $\omega$) such that $d \omega^{n-1} = (n-1) \theta \wedge \omega^{n-1}$.
\end{proposition}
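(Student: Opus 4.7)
The plan is to reduce the statement to the pointwise linear-algebra fact that the Lefschetz-type bundle map
$L^{n-1}\colon T^*M\to \Lambda^{2n-1}T^*M$ sending $\alpha\mapsto \alpha\wedge \omega^{n-1}$ is an isomorphism. Once this is established, applying $(L^{n-1})^{-1}$ to $d\omega^{n-1}\in \Omega^{2n-1}(M)$ produces a unique $1$-form $\eta$ satisfying $\eta \wedge \omega^{n-1} = d\omega^{n-1}$, and the Lee form is then $\theta := \eta/(n-1)$.

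To prove that $L^{n-1}$ is a fiberwise isomorphism, I would use that $\omega$ is non-degenerate (because $g$ is a positive-definite Hermitian metric), so $\omega^n$ is a nowhere vanishing top form on the $2n$-dimensional real manifold $M$. The musical map $X\mapsto \iota_X\omega$ is then an isomorphism $TM\to T^*M$, and contraction with the volume form, $X\mapsto \iota_X\omega^n$, is an isomorphism $TM\to \Lambda^{2n-1}T^*M$. Using the derivation property of the interior product,
\[ \iota_X(\omega^n) \;=\; n\,(\iota_X\omega)\wedge \omega^{n-1}, \]
we see that $L^{n-1}(\iota_X\omega) = \tfrac{1}{n}\,\iota_X\omega^n$; hence, read through the two identifications above, $L^{n-1}$ is (up to the scalar $n$) a composition of isomorphisms, and therefore an isomorphism. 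Note that, since $\dim T^*_pM = 2n = \binom{2n}{2n-1} = \dim \Lambda^{2n-1}T^*_pM$, pointwise injectivity alone would already have sufficed.

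There is no serious obstacle here; the only thing to be careful with is the normalization factor $(n-1)$, which enters because $d\omega^{n-1} = (n-1)\,\omega^{n-2}\wedge d\omega$. This normalization is natural: for $n=2$ it recovers the familiar Lee-form identity $d\omega = \theta\wedge \omega$.
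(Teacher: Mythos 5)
Your proof is correct, but it takes a genuinely different route from the paper. The paper simply exhibits an explicit candidate, $\theta = \frac{1}{n-1} J d^* \omega$, and asserts that one verifies the identity $d\omega^{n-1} = (n-1)\theta\wedge\omega^{n-1}$ directly; existence is thus reduced to a (left unwritten) computation with the codifferential and the Hodge star, and uniqueness is not addressed at all. You instead prove the pointwise Lefschetz-type statement that $\alpha \mapsto \alpha\wedge\omega^{n-1}$ is a bundle isomorphism $T^*M \to \Lambda^{2n-1}T^*M$, by factoring it through the two contraction isomorphisms $X\mapsto \iota_X\omega$ and $X\mapsto\iota_X\omega^n$ via the identity $\iota_X(\omega^n) = n\,(\iota_X\omega)\wedge\omega^{n-1}$; this yields existence and uniqueness simultaneously, and your argument is complete (the resulting $\theta$ is real and smooth since the bundle isomorphism is real and smooth). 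What the paper's approach buys is the closed formula $\theta = \frac{1}{n-1}Jd^*\omega$, which is useful in its own right (it identifies the Lee form with a standard expression appearing throughout the literature); what your approach buys is a self-contained proof of both halves of the statement, in particular of the uniqueness claim that the paper's one-line verification does not cover. The two are of course linked: inverting your isomorphism on $d\omega^{n-1}$ is exactly how one would derive the paper's formula.
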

\begin{proof}
	One easily verifies that $\theta = \frac{1}{n-1} J d^* \omega$ satisfies the above equality.
\end{proof}

\bigskip

In the following, we briefly introduce the special Hermitian metrics of interest for us in this survey, as well as a few initial facts about them.

\subsection{Locally conformally K\"ahler metrics}\label{lck}

A Hermitian manifold 
$(M,J,g,\omega)$ is called 
{\em locally conformally K\"ahler} (LCK) if 
there exists a closed 1-form $\theta$ such that $d\omega=\theta\wedge\omega$.
In particular, $\theta$  is called the Lee form of $\omega$ in the sense of \ref{lee_form} 
and its cohomology class {\em the Lee class} (we refer to \cite{ov24} for details). 

This is equivalent to the existence of a cover $(\tilde
M,J)\arrow (M,J)$ endowed with a K\"ahler metric $\tilde g$ such that the deck group $\Gamma$
acts on the K\"ahler form $\tilde\omega$ 
by holomorphic homotheties.

A simple computation shows that if $\omega$ is LCK with Lee form $\theta$, then $e^f \omega$ is also LCK with Lee form $\theta + df$. Obviously, this means that all the Hermitian metrics in a conformal class are simultaneously LCK or none is.

A celebrated theorem of Vaisman says that on compact complex manifolds of K\"ahler type, an LCK metric is indeed conformal to a K\"ahler one (\cite{vai80}). The result was recently extended to complex analytic spaces (\cite{ps23, ps24}), a setting in which one of the first difficulties is giving the right definition for LCK metrics themselves; furthermore, since all proofs of the smooth Vaisman Theorem require either use of the $\del \delb$-Lemma or of Hodge decomposition, neither of which make sense in a singular setting, the authors had to find a way to reduce the proof to the known smooth setting, via Hironaka desingularization.

\subsubsection{Vaisman manifolds} An LCK manifold whose Lee form is parallel w.r.t. the Levi-Civita connection of the LCK metric (\ie $\nabla \theta = 0$) is called a {\em Vaisman manifold}.

It is known that a Hopf manifold is Vaisman if and only if it is diagonal. More generally, elliptic bundles over projective manifolds are examples of Vaisman manifolds. 

\subsubsection{LCK manifolds with potential} An LCK manifold $M$ is called {\em  LCK manifold with LCK potential}
if the K\"ahler form on the universal cover $\tilde \omega$ of 
$\tilde M$ has a positive K\"ahler potential 
$\phi:\tilde M\to\R^{>0}$ such that the action of
$\pi_1(M)$ multiplies this function by a positive constant.

A compact LCK manifold of complex dimension at least 3 is LCK with potential if and only if it is a Hopf manifold (linear or non-linear) or a complex submanifold of a Hopf manifold (\cite[Theorem 13.22]{ov24}). In particular, all Vaisman manifolds are LCK with potential.

While, in general, the LCK class is not stable at small deformations, the subclass of LCK with potential is stable.

\subsubsection{Kato manifolds} These are compact complex manifolds admitting a Global Spherical Shell, {\em i.e. } an open
subset which is isomorphic as a complex manifold to a small
neighbourhood of a sphere $S^{2n-1} \subset \C^n$ such that the complement
of this open subset is connected. (\cite{ka77}).

Kato manifolds can also be obtained as follows: for any Kato manifold, there exists
a family $M_t$ of complex manifolds over a punctured disk
{such that $M= M_0$ and all other $M_t$ are bimeromorphic to a Hopf manifold (\cite{ka77, dl84}).
	
	All Kato manifolds are LCK, \cite{br11, iop21}. However, they are not Vaisman, \cite{iop21}.
	
	\subsubsection{Oeljeklaus--Toma manifolds} These are affine solvmanifolds covered by $\C^t\times \mathbb{H}^s$, associated to a number field with $2t$ complex embeddings and $s$ real ones. The construction was given in \cite{ot05}. It is interesting to observe that the K\"ahler metric on the universal cover is exact, as for LCK manifolds with potential; however, the positive potential is not automorphic in this case.
	
	Oeljeklaus--Toma (OT) manifolds are LCK (\cite{ot05}), but they are not Vaisman (\cite{kas13}).
	
	\medskip
	
	These 3 classes (LCK with potential, OT and Kato) exhaust all {\em presently known} LCK manifolds.

\subsection{Gauduchon metrics}\label{Gau}
A Hermitian metric on an $n$-dimensional complex manifold is called a {\em Gauduchon metric} if its fundamental form satisfies $dd^c\omega^{n-1}=0$. 

The celebrated ``Th\'eor\`eme de l'excentricit\'e nulle'' (\cite{gau78}) 
 states that such a metric exists in each conformal class of Hermitian metrics on a compact complex manifold, and is unique up to a constant multiplier.

\subsubsection{Strongly Gauduchon metrics}\label{sG}
A Gauduchon metric is called {\em strongly-Gauduchon} if $\partial\omega^{n-1}$ is $\bar\partial$-exact. This is equivalent to $\omega^{n-1}$ being the $(n-1,n-1)$-part of a closed form. The notion was introduced in \cite{po13}, where it was proved that a manifold is strongly Gauduchon if and
only if all exact positive $(1,1)$-currents vanish. Calabi--Eckmann and Hopf manifolds are not strongly Gauduchon. Unlike K\"ahler and LCK structures, strongly Gauduchon structures are stable at modifications, \cite{po13b}.

\begin{remark} In \cite[Theorem 3.3]{ov24b} it was proven that the weaker condition ``$d\omega^{n-1}$ is $d^c$-exact'' is equivalent to the following: ``all positive $(1,1)$-currents with Bott–Chern cohomology classes in the image $d^c(H^1(M))$ vanish".
\end{remark}
 
\subsubsection{$k$-Gauduchon metrics}\label{kG}
A generalization of the previous class is the following: A Hermitian metric $g$ is called {\it k-Gauduchon} if its fundamental form $\omega$ satisfies $dd^c\omega^{k} \wedge \omega^{n-k-1}=0$ (see \cite{fww13}). 

{\em Pluriclosed} metrics (see below) are $1$-Gauduchon metrics, whilst {\em astheno-K\"ahler} are $(n-2)$-Gauduchon. Of course, $(n-1)$-Gauduchon simply means Gauduchon in the earlier sense.

\smallskip

\subsubsection{Endo--Pajitnov manifolds}

A particular case of $k$-Gauduchon metric that seems to occur naturally is one for which $d d^c \omega^k = 0$ for one or sometimes all $1 \le k \le n-1$. As a first example, in \cite{pajitnov1}, the authors introduced a class of affine solvmanifolds with universal cover $\H \times \C^n$, generalizing Inoue surfaces of type $S_M$ (\cite{inoue}) and depending on the initial choice of a matrix $M \in \mathrm{SL}_{2n+1}(\Z)$ with certain conditions on its eigenvalues. In the same paper, the authors proved that these manifolds are not LCK if the matrix $M$ is not diagonalizable, that they only sometimes coincide with OT manifolds and showed that their first Betti number is $1$.

In \cite{cos24}, the authors called them \textit{Endo--Pajitnov manifolds}, computed all their Betti numbers, proving that they have a computational description depending on the eigenvalues of $M$ and proved that, again for certain numerical conditions on its eigenvalues, the corresponding manifold admits metrics satisfying the condition $dd^c \omega^k = 0$ for all $1 \le k \le n-1$, in fact giving a recipe for generating such manifolds, not of OT type, starting from a choice of polynomial (\cite[Lemma 5.7]{cos24}). On the other hand, they strengthened the result in \cite{pajitnov1} proving that Endo--Pajitnov manifolds associated to diagonal matrices do not admit LCK metrics either, \cite[Proposition 5.5]{cos24}. 

\subsection{Pluriclosed metrics}\label{skt}
A Hermitian metric $g$ is called {\it pluriclosed} (or {\em strongly K\"ahler with torsion}, in short SKT) if its fundamental form $\omega$ satisfies $dd^c \omega=0$. See {\em e.g.}  \cite{bis89}. 

These metrics can be characterized as having the torsion of the Bismut connection completely skew-symmetric.

According to \cite{ve14}, a twistor space of an ASD 4-manifold admits pluriclosed metric only if it is K\"ahler. On the other hand, many examples of left-invariant pluriclosed structures were constructed on nilmanifolds (see e.g. \cite{fv16}).

On complex surfaces, pluriclosed metrics coincide with Gauduchon ones, and this is why we impose $n\geq 3$ in this report.

\subsection{Astheno-\K \ metrics}\label{astheno}
A Hermitian metric $g$ is called {\it astheno-\K} if its fundamental form $\omega$ satisfies $dd^c \omega^{n-2}=0$. See {\em e.g.}  \cite{jostyau}, where they were used in connection to certain non-linear elliptic equations which lead to harmonic maps, and \cite{fgv19} where a lot of new examples were found. On threefolds, pluriclosed and astheno-K\"ahler metrics coincide, while in higher dimensions there are explicit examples of astheno-K\"ahler metrics which are not pluriclosed ({\em e.g. } \cite{ft10}). 

\subsection{Balanced metrics}\label{bal}
A Hermitian metric $g$ is called {\it balanced} (or {\em semi-K\"ahler}) if its fundamental form $\omega$ satisfies $d\omega^{n-1}=0$, or equivalently if $\omega$ is co-closed. See  {\em e.g.} M.L. Michelson's paper \cite{m82}.

All twistor spaces admit balanced metrics (\cite[Section 6]{m82}, \cite{gau91}). On the other hand, by cohomological reasons, Calabi--Eckmann manifolds do not admit balanced metrics,  \cite{m82}.

On the other hand, it was proven in \cite{ab93} that a manifold which is bimeromorphic to a balanced one also bears a balanced metric. In particular, all Moishezon (and, more generally, Fujiki class C)
manifolds are balanced. This is remarkable since it is known that K\"ahlerianity is not bimeromorphically invariant.

\begin{remark} The class of strongly  Gauduchon metrics is strictly included in the class of balanced ones.
\end{remark}

\subsubsection{Locally conformally balanced metrics}\label{lcb}
A Hermitian metric $g$ is called {\it locally conformally balanced} (LCB) if the Lee form $\theta$ of its fundamental form $\omega$ is closed. LCK metrics are in particular LCB. Moreover, while there are examples of deformations of LCK manifolds which do not admit any LCK metric, it was proven that all deformations of compact LCK manifolds do have LCB metrics (\cite{sh18}).

Note that  \cite[Example 2]{oos23} shows that balanced and strict locally conformally balanced metrics can coexist on the same compact complex nilmanifold, thus the analogue of Vaisman's result doesn't hold in this context. 

On the other hand, Endo--Pajitnov manifolds admit LCB metrics, but no balanced metrics (\cite[Propositions 5.3, 5.4]{cos24}).

\subsection{Hermitian-symplectic metrics} A \textit{Hermitian-symplectic structure} is a real two-form $\omega$ such that $d\omega = 0$ and its $(1, 1)$-component is positive. See {\em e.g.} \cite{st10}. 

It can be shown that, in this setting, $\omega$ is symplectic and that $\omega^{(1, 1)}$ is a pluriclosed metric. It is still an open question if there exists a complex manifold $M$ of dimension $n \ge 3$ that carries a Hermitian-symplectic structure of non-\K \ type.

Note that a compact complex manifold which admits a Hermitian-symplectic metric also admits a strongly Gauduchon metric (\cite[Lemma 1]{yzz19}, \cite[Proposition 2.1]{dp21}.

\subsection{Locally conformally symplectic (LCS) taming the complex structure} A real $2$-form $\omega$ satisfying $d\omega = \theta \wedge \omega$ for a real closed $1$-form $\theta$ (again called the Lee form) is LCS taming the complex structure $J$ if $\omega^{(1, 1)}$ is positive. These structures were mainly studied on compact complex surfaces, {\em e.g. } \cite{ad23}.
	
\section{Recipe for producing examples}

We briefly describe a blueprint that has been used (\textit{e.g.} \cite{oos23}, \cite{ps21}) to construct compact complex manifolds with explicit special non-\K \ metrics of different types, produced from structure equations on a Lie algebra via left-invariance.

\begin{definition} A {\it nilmanifold} $\Gamma \backslash G$ is a compact quotient of a simply connected nilpotent Lie group $G$ by a discrete subgroup $\Gamma$.
	
\end{definition}

\begin{definition} A Lie algebra $\mathfrak{g}$ is called \textit{nilpotent} if the following descending series 
	\begin{equation*}
		\mathcal{C}^0\mathfrak{g}:=\mathfrak{g}, \ \ \mathcal{C}^{i+1}\mathfrak{g}:=[\mathcal{C}^i\mathfrak{g}, \mathfrak{g}], \ \ i \geq 0
	\end{equation*}
	becomes stationary at $0$ \ie there exists $k\geq 1$ such that $\mathcal{C}^{l}\mathfrak{g}=0$, for any $l \geq k$. We call $k$ the {\it nilpotency step} of $\mathfrak{g}$ if $\mathcal{C}^{k-1}\mathfrak{g} \neq 0$.
	
	A Lie group $G$ is called \textit{nilpotent} if its Lie algebra is nilpotent.
\end{definition}
 
\begin{definition}
	A differential object on $M$ which is induced by projection from a left-invariant differential object on $G$ is called {\em left-invariant}.	
\end{definition}

\begin{definition} A complex structure $J$ on a Lie algebra $\mathfrak{g}$ is an endomorphism of $\mathfrak{g}$ with $J^2=-\mathrm{id}_{\mathfrak{g}}$, satisfying
	\begin{equation*}
		J[X, Y]-[JX, Y]-[X, JY]-J[JX, JY]=0,
	\end{equation*}
	for any $X, Y \in \mathfrak{g}$.
\end{definition} 

\medskip

At the center of the construction is the following result:

\begin{proposition}
	\label{prop:2step}
	A 2-step complex nilmanifold $(\Gamma \backslash G, J)$ with a left-invariant complex structure $J$ and $J$-invariant center has a $(1,0)$ co-frame $\{\alpha_1, ..., \alpha_n \}$ satisfying the structure equations
	\begin{equation}\label{structure2-step}
		\left\{
		\begin{array}{ll}
			d\alpha_i=0,  &1 \leq i \leq k, \\[.1in]
			d\alpha_i= \displaystyle\sum\limits_{r,s=1}^{k} \left( \frac{1}{2} c^i_{rs} \alpha_r \wedge \alpha_s + c^i_{r \overline{s}} \alpha_r \wedge \overline{\alpha}_s \right), &k < i \le n,
		\end{array}
		\right.
	\end{equation}
	for some $1 \le k < n - 1$ and constants $c^i_{rs}, c^i_{r \overline{s}} \in \mathbb{C}$. 
\end{proposition}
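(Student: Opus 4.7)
The plan is to identify left-invariant forms on $\Gamma\backslash G$ with elements of $\Lambda^{\bullet}\mathfrak{g}^*$ and to apply the Chevalley--Eilenberg formula $d\alpha(X,Y) = -\alpha([X,Y])$ (for $\alpha\in\mathfrak{g}^*$, $X,Y\in\mathfrak{g}_{\C}$), combining it with a $J$-adapted vector-space splitting of $\mathfrak{g}$ to manufacture the desired $(1,0)$-coframe.

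Set $V := Z(\mathfrak{g})$. The 2-step hypothesis reads $[\mathfrak{g},\mathfrak{g}]\subseteq V$, and by assumption $V$ is $J$-invariant, so $J$ descends to the quotient $\mathfrak{g}/V$. Choose $u_1,\dots,u_k\in\mathfrak{g}$ whose classes modulo $V$ form a complex basis of $\mathfrak{g}/V$, and set $U := \operatorname{span}_{\R}\{u_j, Ju_j\}_{j=1}^{k}$. Then $U$ is $J$-invariant and the projection $U\to\mathfrak{g}/V$ is a real isomorphism by construction, so $\mathfrak{g} = U\oplus V$ as $J$-invariant real subspaces. Since the 2-step hypothesis forces $[\mathfrak{g},\mathfrak{g}]\ne 0$, the algebra is non-abelian, hence both $U$ and $V$ are nonzero; writing $k := \dim_{\C}U$ yields $1\le k\le n-1$ with $\dim_{\C}V = n-k$.

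Complexifying and taking $(1,0)$-parts (which is compatible with the splitting because $J$ preserves both summands) gives $\mathfrak{g}^{1,0} = U^{1,0}\oplus V^{1,0}$. Pick complex bases $\{e_1,\dots,e_k\}$ of $U^{1,0}$ and $\{e_{k+1},\dots,e_n\}$ of $V^{1,0}$, and let $\{\alpha_1,\dots,\alpha_n\}$ be the dual $(1,0)$-coframe, defined by $\alpha_i(e_j) = \delta_{ij}$ and $\alpha_i(\bar e_j) = 0$. By construction $\alpha_i$ annihilates $V_{\C}$ for $i\le k$, while for $i>k$ it annihilates $U_{\C}$ and $V^{0,1}$.

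The structure equations now fall out of $d\alpha_i(X,Y) = -\alpha_i([X,Y])$. For $i\le k$, the 2-step hypothesis puts $[X,Y]$ into $V_{\C}$, on which $\alpha_i$ vanishes, so $d\alpha_i = 0$. For $i>k$, centrality of $V$ implies $[X,Y] = 0$ whenever $X$ or $Y$ lies in $V_{\C}$, so $d\alpha_i$ is a $2$-form in $\Lambda^2 U_{\C}^*$, expressible entirely in terms of the $\alpha_r, \bar\alpha_r$ with $r\le k$; integrability of $J$ annihilates its $(0,2)$-component, leaving a $(2,0)$-piece $\tfrac{1}{2}\sum_{r,s\le k}c^i_{rs}\,\alpha_r\wedge\alpha_s$ with $c^i_{rs} = -c^i_{sr}$ and a $(1,1)$-piece $\sum_{r,s\le k}c^i_{r\bar s}\,\alpha_r\wedge\bar\alpha_s$, which is exactly~\eqref{structure2-step}. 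The only mildly subtle step is the choice of the $J$-invariant complement $U$, which hinges on the $J$-invariance of the center; everything else is routine Chevalley--Eilenberg bookkeeping.
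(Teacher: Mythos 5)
Your argument is correct and is the standard proof of this fact; the paper itself states the proposition without proof (it is quoted as the basis for the construction, from the cited literature), so there is nothing to compare against, but your route --- splitting $\mathfrak{g}=U\oplus Z(\mathfrak{g})$ into $J$-invariant summands, dualizing to a $(1,0)$-coframe, and reading off the Chevalley--Eilenberg differential together with the vanishing of the $(0,2)$-component forced by integrability --- is exactly what is intended. One small point: your construction yields $1\le k\le n-1$ (with $k=\dim_{\C}\mathfrak{g}/Z(\mathfrak{g})$), not $k<n-1$ as printed in the statement; the printed bound appears to be a typo, since already the Iwasawa manifold has $k=n-1=2$, so you need not worry about the discrepancy.
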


\bigskip

The above result gives rise to the following mechanism: first, one can define a Lie algebra with a complex structure by prescribing the structure constants $c_{rs}^i, c^i_{r \overline{s}} \in \Q$. By Lie's third theorem (proven indeed by Cartan) (see \eg \cite[Page 152]{s65}), there exists a simply connected Lie group $G$ with Lie algebra $\mathfrak{g}$ on which we extend $J$ to a left-invariant complex structure. Using now a theorem of Malcev (\cite{m62}), since the structure equations of $G$ are rational, there exists  a discrete subgroup $\Gamma$ such that $\Gamma\backslash G$ is compact. Therefore, $\mathfrak{g}$ is the Lie algebra of a compact complex nilmanifold. Furthermore, a left-invariant Hermitian metric can also be defined in terms of the co-frame $\alpha_1, ..., \alpha_n$.

\smallskip

To exemplify, for the structure equations:
\begin{equation*}
	\left\{
	\begin{array}{ll}
		d\alpha_k=0, \ \ \ \  1 \leq k \leq n-1, \\[.1in]
		d\alpha_n= q \alpha_1 \wedge \overline{\alpha}_1.
	\end{array}
	\right.
\end{equation*}
the resulting $2$-step manifold created by the above described method supports the $2$-form
\begin{equation*}
	\omega := \sum^n_{j=1} \mathrm{i} \alpha_j \wedge \overline{\alpha}_j
\end{equation*}
which is both lcb and pluriclosed (\cite[Example 4.5]{oos23}), thereby proving the two types are indeed compatible.
 
\begin{remark}
Another class of examples can be obtained on a product of two Sasakian manifolds. Indeed, in \cite{at23} an infinite class of integrable complex structures is constructed on such a product manifold and the existence of various types of non-K\"ahler metrics is examined. See also \cite{mar24} for a different look at this construction.
\end{remark}

\section{Incompatibility results}

Clearly, a \K \ metric belongs to every class of Hermitian metric described in the previous section. As \K \ geometry is reasonably well understood, we are fundamentally interested in the above classes only when they are not \K \ and, in fact, only when the manifold itself is of non-\K \ type. For this reason, we sometimes refer to them as \emph{non-\K \ metrics}.

One of the natural questions one can ask in this context is one of (in)compability \ie when can metrics belonging to two different classes coexist on the same complex manifold (without it being of \K \ type)? Some compatibilities are obviously possible: as stated in the previous section, any LCK metric is LCB and every balanced metric is also LCB. However, the proven results so far in literature tend to indicate that incompatibility is far more likely. 

In this section, we present results that prove such incompabilities between most classes, at least in particular settings, but also collect some examples that shows coexistence is possible between other pairs of classes.

\subsection{For the same metric}

We start with an easier question: can the same metric belong to two different classes of non-\K \ metrics?

\smallskip

For this case, we have the following two theorems:

\begin{theorem}\label{thmip}\textnormal{(\cite[Theorem 1.3, Proposition 3.8]{ip13}, see also \cite{ai03})} Let $(M, J)$ be a compact complex manifold. Then for any Hermitian metric, the conditions LCK and $k$-Gauduchon (in particular, pluriclosed, or astheno-K\"ahler), respectively balanced and $k$-Gauduchon (in particular, pluriclosed, or astheno-K\"ahler) are mutually incompatible in a given conformal class.
\end{theorem}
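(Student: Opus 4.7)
The plan is to reduce both incompatibilities to a single $L^2$-identity obtained by integrating the $k$-Gauduchon condition against $\omega^{n-k-1}$. For any Hermitian $\omega$ on compact $M$, Stokes' theorem applied to $d(d^c\omega^k\wedge\omega^{n-k-1})$ yields
\[
\int_M dd^c\omega^k\wedge\omega^{n-k-1} \;=\; \int_M d^c\omega^k\wedge d\omega^{n-k-1} \;=\; k(n-k-1)\int_M \omega^{n-3}\wedge d^c\omega\wedge d\omega,
\]
the last equality using the derivation property of $d$ and $d^c$ together with the fact that $\omega$ is a $2$-form. The $k$-Gauduchon hypothesis kills the left-hand side, and the restriction $1\le k\le n-2$ (which covers the pluriclosed and astheno-K\"ahler cases) makes $k(n-k-1)\ne 0$, so the rightmost integral must vanish.

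Next I would evaluate the integrand pointwise. Splitting $d\omega=\partial\omega+\bar\partial\omega$ and $d^c\omega=\sqrt{-1}(\partial\omega-\bar\partial\omega)$, and using $\partial\omega\wedge\partial\omega=\bar\partial\omega\wedge\bar\partial\omega=0$ (squares of odd-degree forms), one gets $d^c\omega\wedge d\omega=2\sqrt{-1}\,\partial\omega\wedge\bar\partial\omega$. To handle the $(2,1)$-form $\partial\omega$, I would use the Lefschetz decomposition: comparing types in the Lee-form identity $d\omega^{n-1}=(n-1)\theta\wedge\omega^{n-1}$ from Proposition \ref{lee_form} gives $\omega^{n-2}\wedge\partial\omega=\theta^{1,0}\wedge\omega^{n-1}$, which forces
\[
\partial\omega \;=\; (\partial\omega)_0 + \omega\wedge\theta^{1,0}
\]
with $(\partial\omega)_0$ primitive. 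Primitivity ($\omega\wedge(\partial\omega)_0=0$) kills the cross-terms in $\omega^{n-3}\wedge\partial\omega\wedge\bar\partial\omega$, and the Hodge--Riemann bilinear relations---for primitive $(2,1)$-forms and for $(1,0)$-forms respectively---make the two surviving contributions have opposite imaginary signs. Collecting everything, the vanishing of the integral rewrites as
\[
A\int_M|(\partial\omega)_0|^2\,\omega^n \;=\; B\int_M|\theta|^2\,\omega^n
\]
for explicit positive constants $A,B$ depending only on $n$.

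The two incompatibilities now fall out in opposite directions. In the LCK case, $d\omega=\theta\wedge\omega$ gives $\partial\omega=\theta^{1,0}\wedge\omega$, whose primitive part vanishes; the identity then forces $\theta=0$, i.e.\ $\omega$ is K\"ahler. Since LCK-ness is a conformal invariant, any $k$-Gauduchon representative in an LCK conformal class falls under this conclusion. In the balanced case, $\theta=0$ kills the right-hand side, forcing $(\partial\omega)_0=0$; together with $\omega\wedge\theta^{1,0}=0$ this gives $\partial\omega=0$, and once again $\omega$ is K\"ahler.

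The only delicate step is the Hodge--Riemann analysis: the primitive piece $(\partial\omega)_0$ and the Lee piece $\omega\wedge\theta^{1,0}$ must be shown to contribute with \emph{opposite} signs to $\omega^{n-3}\wedge\partial\omega\wedge\bar\partial\omega$, for otherwise the identity would force both integrals to vanish simultaneously, contradicting the existence of non-K\"ahler pluriclosed metrics on, say, the Iwasawa threefold. Tracking this sign carefully via the standard positivity formulas for primitive $(p,q)$-forms is the only genuine computational point; the rest is formal.
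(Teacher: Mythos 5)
The paper does not prove this statement: it is a survey item quoted from \cite[Theorem 1.3, Proposition 3.8]{ip13}, with the ``unified proof'' of \cite[Theorem 2.1]{ov25} mentioned in the remark that follows, so there is no in-paper argument to compare against. Your proof is correct and is essentially that unified argument: Stokes gives $\int_M dd^c\omega^k\wedge\omega^{n-k-1}=k(n-k-1)\int_M\omega^{n-3}\wedge d^c\omega\wedge d\omega=2\1 k(n-k-1)\int_M\omega^{n-3}\wedge\partial\omega\wedge\bar\partial\omega$ (this is, up to the factor $k(n-k-1)$, the $k$-independent $\gamma_k$-type invariant of \cite{fww13}); the Lefschetz splitting $\partial\omega=(\partial\omega)_0+\omega\wedge\theta^{1,0}$ follows from the Lee-form identity exactly as you say; and the Hodge--Riemann signs are indeed opposite, since $\1\,\omega^{n-3}\wedge\beta\wedge\bar\beta=-(n-3)!\,|\beta|^2\,\mathrm{vol}$ for primitive $(2,1)$-forms while $\1\,\omega^{n-1}\wedge\theta^{1,0}\wedge\theta^{0,1}=+(n-1)!\,|\theta^{1,0}|^2\,\mathrm{vol}$, yielding $(n-3)!\int|(\partial\omega)_0|^2=(n-1)!\int|\theta^{1,0}|^2$ and hence both incompatibilities. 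You also correctly locate the necessary restriction $1\le k\le n-2$ (the statement is false for $k=n-1$, since every conformal class contains a Gauduchon metric). Two minor slips, neither fatal: primitivity of the $3$-form $(\partial\omega)_0$ means $\omega^{n-2}\wedge(\partial\omega)_0=0$, not $\omega\wedge(\partial\omega)_0=0$ (for $n\ge 4$ the latter would force $(\partial\omega)_0=0$ outright); the correct condition is what actually kills your cross-terms and is what you implicitly use. And the Iwasawa threefold admits no pluriclosed metric, so it is not a valid witness for your sign sanity check --- a correct one is the nilmanifold of \cite[Example 4.5]{oos23} quoted in Section 3, or $S^3\times S^3$.
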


\begin{theorem}\textnormal{(\cite[Proposition 7]{oos23}} Let $(M, J)$ be a compact complex manifold. If a Hermitian metric $\omega$ is $k_1$ and $k_2$-Gauduchon for $1 \le k_1 < k_2 \le n-1$, then it is $k$-Gauduchon for all $1 \le k \le n-1$ (in particular, it is Gauduchon).
\end{theorem}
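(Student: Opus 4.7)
The plan is a direct computation followed by a trivial $2 \times 2$ linear-algebra argument, exploiting the fact that $dd^c\omega^k \wedge \omega^{n-k-1}$ depends on $k$ in a very controlled (affine) way.

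First, I would expand $dd^c \omega^k$ via the Leibniz rule. Since $\omega$ has even degree and both $d$ and $d^c$ are antiderivations, one quickly obtains $d^c \omega^k = k\,\omega^{k-1} \wedge d^c \omega$ and then
\[
dd^c\omega^k \;=\; k(k-1)\,\omega^{k-2}\wedge d\omega\wedge d^c\omega \;+\; k\,\omega^{k-1}\wedge dd^c\omega.
\]
Wedging with $\omega^{n-k-1}$ absorbs all the $k$-dependent powers of $\omega$, producing
\[
dd^c\omega^k \wedge \omega^{n-k-1} \;=\; k(k-1)\,\alpha \;+\; k\,\beta,
\]
where $\alpha := \omega^{n-3}\wedge d\omega\wedge d^c\omega$ and $\beta := \omega^{n-2}\wedge dd^c\omega$ are two top-degree forms independent of $k$. (Note that $1 \le k_1 < k_2 \le n-1$ forces $n \ge 3$, so $\omega^{n-3}$ is legitimate.)

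Second, I would work pointwise. Fixing an arbitrary $p \in M$ and dividing by $k_i \ne 0$, the two hypotheses become the pointwise linear system
\[
(k_1 - 1)\,\alpha_p + \beta_p = 0, \qquad (k_2 - 1)\,\alpha_p + \beta_p = 0,
\]
whose determinant equals $k_2 - k_1 \ne 0$. Thus $\alpha_p = \beta_p = 0$; since $p$ was arbitrary, $\alpha \equiv 0 \equiv \beta$ on $M$. Substituting back into the master formula yields $dd^c\omega^k \wedge \omega^{n-k-1} = 0$ for every $1 \le k \le n-1$, with the case $k = n-1$ giving the Gauduchon condition.

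I do not expect any genuine obstacle here: the argument is purely an affine interpolation between the two given values of $k$. The only care required is in the sign conventions of the Leibniz expansion, which is painless because $\omega$ has even degree and hence all the $(-1)^{\deg(\cdot)}$ factors one picks up are $+1$.
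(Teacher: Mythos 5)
Your proof is correct. The survey gives no proof of this statement (it only cites \cite[Proposition 7]{oos23}), but your computation --- expanding $dd^c\omega^k\wedge\omega^{n-k-1}=k(k-1)\,\omega^{n-3}\wedge d\omega\wedge d^c\omega+k\,\omega^{n-2}\wedge dd^c\omega$, so that $\tfrac1k\,dd^c\omega^k\wedge\omega^{n-k-1}$ is affine in $k$ and two vanishing values force both coefficients to vanish pointwise --- is exactly the argument of the cited reference (note, incidentally, that it nowhere uses compactness).
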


\medskip

\begin{remark} \ref{thmip} shows, in particular, that a Hermitian metric cannot be simultaneously neither LCK and $k$-Gauduchon, nor $k$-Gauduchon and balanced. For the case pluriclosed and balanced, see also  \cite[Proposition 2.6]{dp23}, \cite{crs22}. A new, unitary, proof of these results was proposed recently in \cite[Theorem 2.1]{ov25}.
\end{remark}

\medskip

\begin{remark}
However, it is possible for a metric to be both LCB and pluriclosed, as shown in \cite[Example 1]{oos23}.
\end{remark}
	
\subsection{For different metrics}	We now turn to the more difficult problem of the coexistence of two metrics belonging to different classes.

As regards the LCK metrics, the most general result up to now is the following:

\medskip

\begin{theorem}\textnormal{(\cite[Theorem 4.17]{ov25}}
	Let $(M,\omega, \theta)$ be a compact  non-K\"ahler LCK-manifold which 
	is bimeromorphic to any of the {\em known} LCK manifolds, {\em i.e.} either  an LCK manifold with potential,  
	an Oeljeklaus--Toma manifold or a Kato
	manifold. Then  $(M,\omega, \theta)$ does not admit a balanced metric.
\end{theorem}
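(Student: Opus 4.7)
The plan is to reduce the claim, via the bimeromorphic invariance of the balanced property, to a non-existence statement within each of the three known classes of LCK manifolds, and then handle each class separately. Suppose, for contradiction, that $M$ admits a balanced metric. By \cite{ab93} (recalled in Section 2.5), the balanced property is a bimeromorphic invariant, so a balanced metric exists also on the target $N$ of the bimeromorphism, which by hypothesis is an LCK manifold with potential, an Oeljeklaus--Toma manifold, or a Kato manifold. It thus suffices to show that no compact manifold in any of these three classes carries a balanced metric.

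I would treat the Kato manifolds first, since this case reduces to the Hopf one: by definition a Kato manifold is bimeromorphic to a Hopf manifold (see Section 2.1.3), so bimeromorphic invariance collapses the Kato case to the Hopf case. For a Hopf manifold, diffeomorphic to $S^1\times S^{2n-1}$, the K\"unneth formula gives $H^{2n-2}(M,\R)=0$ for $n\geq 2$. A balanced metric $\omega_b$ would then satisfy $[\omega_b^{n-1}]=0$, and pairing with $[\omega_b]$ would force $\int_M \omega_b^n = 0$, contradicting the positivity of the Hermitian volume.

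For the LCK-with-potential case, I would invoke the classification of \cite[Theorem 13.22]{ov24}: in complex dimension at least $3$ such a manifold is a Hopf manifold or a complex submanifold of one. The Hopf case is handled above; for a proper submanifold $X\subset H$, the ambient LCK potential restricts to $X$, making $X$ itself LCK with potential. One then argues at the level of the universal cover: $\tilde X \subset \C^n \setminus\{0\}$ inherits an automorphic, strictly plurisubharmonic potential $\phi$, and pairing a lift of $\omega_b^{n-1}$ against $dd^c \phi$ and applying Stokes' theorem on a fundamental domain yields a contradiction with the strict positivity of $\phi$. In complex dimension $2$, a balanced metric is K\"ahler, so the non-K\"ahler hypothesis rules it out directly.

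The remaining case is Oeljeklaus--Toma: here I would use the explicit solvmanifold presentation, symmetrizing via the action of the solvable Lie group to reduce a putative balanced form to a left-invariant one, and then check that the structure equations coming from the embedding of the underlying number field preclude such invariant balanced forms. I expect the main obstacle to lie in the LCK-with-potential case for proper submanifolds of Hopf manifolds and in the OT case, since the purely topological vanishing used for Hopf manifolds is not available there; both rely on the finer current-theoretic and dynamical arguments developed in \cite{ov25}, which effectively substitute an explicit incompatibility between the LCK potential (or the OT solvable structure) and the existence of a closed positive $(n-1,n-1)$-form for the missing cohomological obstruction.
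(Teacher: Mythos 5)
Your reduction strategy breaks down at several points, the most serious being the Kato case. You assert that ``by definition a Kato manifold is bimeromorphic to a Hopf manifold,'' but this is a misreading of the construction recalled in Section 2: a Kato manifold $M$ is the \emph{central fibre} $M_0$ of a family over the disk whose \emph{other} fibres $M_t$, $t\neq 0$, are bimeromorphic to Hopf manifolds; $M_0$ itself is in general not (minimal Kato surfaces with $b_2>0$, such as Inoue--Hirzebruch surfaces, cannot be bimeromorphic to Hopf surfaces, which have $b_2=0$). Since balancedness is neither open nor closed under deformations, nothing can be salvaged by passing to nearby fibres, so this case is simply not handled. The Hopf computation is also wrong as written: a balanced form $\omega_b$ is not closed ($d\omega_b^{n-1}=0$, not $d\omega_b=0$), so $[\omega_b]$ is not a de Rham class and ``pairing with $[\omega_b]$'' is meaningless; from $\omega_b^{n-1}=d\alpha$ one only gets $\int_M\omega_b^n=\pm\int_M d\omega_b\wedge\alpha$, which has no reason to vanish. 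To exploit $H^{2n-2}(M,\R)=0$ you would need to pair $\omega_b^{n-1}$ against a nonzero \emph{closed positive $(1,1)$-current}, which exists on diagonal Hopf manifolds (pullback of the Fubini--Study form) but requires an argument in general. Finally, for the two genuinely hard cases --- proper submanifolds of Hopf manifolds and Oeljeklaus--Toma manifolds --- you defer to ``the arguments developed in \cite{ov25}'', i.e.\ to the very paper whose theorem is being proved; the sketched Stokes argument on a fundamental domain does not close up because $d^c\phi$ is only automorphic, not invariant, so the boundary contributions acquire factors $(1-\lambda_\gamma)$ with no sign control.

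For comparison, the mechanism actually behind the theorem (as the survey's subsequent remark states) is unified rather than case-by-case: one shows that on every \emph{known} LCK manifold the degree of the Lee class with respect to an \emph{arbitrary} Gauduchon metric is strictly positive, whereas a balanced metric is in particular a Gauduchon metric for which this degree vanishes (integrating $d^c\theta\wedge\nu^{n-1}$ by parts against $d\nu^{n-1}=0$). The bimeromorphic invariance of \cite{ab93} then transports the conclusion as you intended. The case analysis in \cite{ov25} goes into proving the positivity of the degree for each of the three classes, not into producing three unrelated obstructions; your proposal replaces this single invariant with ad hoc arguments, two of which fail and two of which are circular.
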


\medskip

\begin{remark}
	What is behind this result is the strict positivity of the degree of the Lee form of any of the known LCK manifolds  w.r.t. any Gauduchon metric on them.
\end{remark}

\bigskip

If one restricts to nilmanifolds with left-invariant structures, one can prove more general results:

\begin{theorem}\textnormal{(\cite[Theorem 1.1]{fv16}})
	Let $M$ be a compact complex nilmanifold endowed with a left-invariant complex structure $J$. If it carries both a balanced metric and a pluriclosed metric, compatible with $J$, then $(M, J)$ is a complex torus.
\end{theorem}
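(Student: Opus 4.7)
My first move would be to symmetrize. The standard averaging argument, going back to Belgun and refined by Fino--Grantcharov and Ugarte, uses the Haar measure on $G$ to replace any $J$-compatible Hermitian form $\omega$ on $M=\Gamma\backslash G$ by a left-invariant $(1,1)$-form $\omega^{\mathrm{inv}}$, still positive, while $d$ and $d^c$ commute with the averaging. Both balancedness $(d\omega^{n-1}=0)$ and pluriclosedness $(dd^c\omega=0)$ are therefore preserved. So I may assume from the outset that both the balanced metric $\omega_B$ and the pluriclosed metric $\omega_P$ are left-invariant, equivalently given by data on the Lie algebra $\mathfrak{g}$ endowed with $J$.

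The core of the argument is then a Stokes-theorem computation on $M$ that combines both metrics. Concretely, I would test the exact form $d(\omega_P\wedge d^c\omega_B^{n-2})$, expanding it as
$$\int_M d\omega_P\wedge d^c\omega_B^{n-2} \;+\; \int_M \omega_P\wedge dd^c\omega_B^{n-2} \;=\; 0,$$
and then rearranging each piece using $dd^c\omega_P=0$, $d\omega_B^{n-1}=0$, and integration by parts (legitimate by left-invariance). The goal is to produce an identity of the form
$$\int_M |\partial\omega_P|_{\omega_B}^2\,\omega_B^n \;=\; 0,$$
which forces $\partial\omega_P=0$, so $\omega_P$ itself is K\"ahler. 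Once a K\"ahler metric on the nilmanifold $M$ is produced, the Benson--Gordon theorem (equivalently Hasegawa's formality argument) shows $G$ must be abelian, and hence $(M,J)$ is a complex torus.

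The main obstacle lies in the second stage: the naive Stokes expansion does not, as written, group into a manifestly non-negative integrand. One must choose the right test form and exploit the positivity of $\omega_P$ as a $(1,1)$-form, the bidegree decomposition $d=\partial+\bar\partial$, and the fact that on a nilpotent algebra $d$ raises the Chevalley--Eilenberg filtration degree, in order to pair terms so that positivity can be read off. Calibrating this pairing, which is the essence of the Fino--Vezzoni argument, is both the hardest and the most delicate step; by contrast, stages one and three are well-established and largely automatic once the input is in place.
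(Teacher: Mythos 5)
First, a caveat: the survey does not actually prove this statement --- it is quoted from \cite{fv16} --- so I am comparing your proposal against the known Fino--Vezzoni argument, whose flavour the survey indicates in the remark following \ref{oos_main} (symmetrize, then invoke a structure theorem for the Lie algebra). Your first stage, the Belgun-type averaging reducing everything to left-invariant data on $\mathfrak{g}$, is indeed the first step of that proof and is fine, modulo the standard point that for the balanced metric one averages the closed positive $(n-1,n-1)$-form $\omega_B^{n-1}$ and then extracts its $(n-1)$-st root via Michelsohn's theorem.

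The gap is in your second stage, and it is structural rather than a matter of ``calibrating the pairing''. The Stokes identity you propose uses only compactness; left-invariance plays no role in justifying integration by parts, so your scheme, if it closed up, would show that on \emph{any} compact complex manifold the coexistence of a balanced and a pluriclosed metric forces the pluriclosed one to be K\"ahler. That is the general Fino--Vezzoni conjecture, which is open. The known integral identities of this type (\cite{ai03}, \cite{ip13}; see \ref{thmip}) handle only the case where the balanced and pluriclosed structures are the \emph{same} metric, precisely because then the cross terms assemble into norms $|\cdot|^2_{\omega}$ with respect to a single metric; with two different metrics there is no sign to extract, and you have correctly located this failure yourself without a mechanism to repair it. The actual proof uses the nilpotent structure in an essential, non-integral way: by a theorem of Enrietti--Fino--Vezzoni, the existence of an invariant pluriclosed metric forces $\mathfrak{g}$ to be $2$-step nilpotent (with $J$-invariant center), so one may write the structure equations \eqref{structure2-step}; the pluriclosed and balanced conditions then become quadratic relations in the structure constants $c^i_{rs}$, $c^i_{r\overline{s}}$ whose combination is a vanishing sum of squares, giving that $\mathfrak{g}$ is abelian directly. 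In particular no appeal to Benson--Gordon or Hasegawa is needed, although that would indeed finish the argument if one had first produced a K\"ahler metric as you intend.
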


\medskip

Regarding LCK, the following theorem was proved first:

\begin{theorem}\textnormal{(\cite[Theorem 13, Theorem 15, Corollary 16]{oos23}}\label{oos_main}
	Let $M$ be a compact complex nilmanifold endowed with a left-invariant complex structure $J$. If it carries both:
	\begin{itemize}
		\item  an LCK   and a balanced metric, or
		\item an LCK metric  and a left invariant $k$-Gauduchon metric, for some $1 \le k < n-1$, or
		\item  an LCK metric and a pluriclosed metric
	\end{itemize}
	compatible with $J$, then $(M, J)$ is a complex torus.
\end{theorem}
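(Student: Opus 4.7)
The plan is to reduce both metrics to the left-invariant (LI) setting, show that the LCK metric must in fact be Kähler (i.e.\ the Lee form $\theta=0$), and then invoke Benson--Gordon's theorem to conclude that $(M,J)$ is a complex torus.

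For the reduction, let $\omega_1$ be the LCK metric with Lee form $\theta$ and $\omega_2$ be the partner metric. Since $\theta$ is closed, Nomizu's isomorphism $H^1(M,\R)\cong H^1(\mathfrak{g},\R)$ provides a LI representative $\theta_0=\theta-df$, and the conformal change $e^{-f}\omega_1$ is again LCK with Lee form $\theta_0$. Averaging this metric against the LI Haar measure on $\Gamma\backslash G$ yields a LI LCK metric with the same LI Lee form $\theta_0$, since symmetrization commutes with $d$ and with wedging by LI forms. For the partner metric: in the pluriclosed case, symmetrization commutes with $dd^c$; in the balanced case, one symmetrizes $\omega_2^{n-1}$ as an $(n-1,n-1)$-form and extracts a LI balanced metric by Michelsohn's $(n-1)$-th-root construction; in the $k$-Gauduchon case, $\omega_2$ is LI by assumption. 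Thus we may assume $\omega_1$, $\omega_2$ and $\theta$ are all LI, and everything descends to the Lie algebra $(\mathfrak{g},J)$.

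The core argument takes place on $\mathfrak{g}$ and consists in forcing $\theta=0$. The key identity is $d\omega_1^{k}=k\,\theta\wedge\omega_1^{k}$, so $\theta\wedge\omega_1^{k}$ is $d$-exact for every $k$. Combined with the defining vanishing for a power of $\omega_2$ (i.e.\ $d\omega_2^{n-1}=0$, or $dd^c\omega_2^{k}\wedge\omega_2^{n-k-1}=0$), one wedges and integrates an appropriate mixed top form built from $\omega_1$, $\omega_2$, $\theta$ and (in the $k$-Gauduchon and pluriclosed cases) $J\theta$. Stokes' theorem, together with the strict positivity of both Hermitian forms, forces $\theta=0$. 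Once $\theta=0$, the metric $\omega_1$ is Kähler on a compact nilmanifold, and Benson--Gordon's theorem then identifies $(M,J)$ with a complex torus.

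The main obstacle is selecting, in each of the three partner scenarios, the correct top-degree test form so that Stokes' theorem produces a definite-sign identity. The balanced case is cleanest because $d\omega_2^{n-1}=0$ is purely real; the $k$-Gauduchon and pluriclosed variants require engaging with $d^c$ and juggling $\theta$ against $J\theta$, which in turn demands a careful analysis of the $(p,q)$-decomposition of the factors on $(\mathfrak{g},J)$.
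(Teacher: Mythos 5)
Your reduction to left-invariant data is sound and matches the first step of the argument in \cite{oos23} that this survey sketches: Belgun-type symmetrization, after first making the Lee form invariant via Nomizu's theorem and a conformal change, plus Michelsohn's root trick for the balanced partner. The divergence --- and the gap --- is in the core step. You propose to force $\theta=0$ by wedging the exact forms $\theta\wedge\omega_1^{k}=\tfrac1k\,d\omega_1^{k}$ against powers of $\omega_2$ and invoking ``Stokes plus strict positivity of both Hermitian forms,'' but you never exhibit the test form, and you yourself flag its selection as ``the main obstacle.'' This is not a technicality. When the two metrics are different, the integrands such a scheme produces (e.g.\ $dJ\theta\wedge\omega_2^{n-1}$, or mixed terms of the shape $\theta\wedge J\theta\wedge\omega_1^{a}\wedge\omega_2^{b}$) have no pointwise sign: the positivity that drives the integration-by-parts arguments is only available when $\omega_1=\omega_2$, which is exactly the content of the ``same metric'' incompatibilities in \ref{thmip}. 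Indeed, if an abstract Stokes-and-positivity argument of the kind you describe existed, it would settle the open conjecture that LCK and balanced (or pluriclosed) metrics cannot coexist on \emph{any} compact non-K\"ahler manifold; your outline uses nilpotency only through the averaging step, which is a strong signal that the scheme cannot close as stated.

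The proof sketched in the survey replaces your second step by a structure theorem: by Sawai \cite{s07}, a non-toral compact nilmanifold with left-invariant complex structure carrying a left-invariant LCK metric has Lie algebra the product of a real Heisenberg algebra with $\R$, with explicit structure equations; the contradiction with the balanced, $k$-Gauduchon or pluriclosed partner is then obtained by direct computation in that coframe. In particular the conclusion is not reached by proving $\theta=0$ and citing Benson--Gordon--Hasegawa; the non-torus case is excluded outright because that specific Lie algebra cannot support the partner metric. To repair your proposal you would need either to supply the explicit test forms --- which, as far as is known, do not exist in the generality you describe --- or to import Sawai's classification as the actual proof does.
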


\begin{remark}
	The proof of the above result relies firstly on an averaging argument of the type first made in \cite{bel00}, through which one may assume that if a special non-\K \ metric exists, then there also exists one which is left-invariant (note that for LCK metrics, one must also know that the Lee form is left-invariant). Secondly, assuming the nilmanifold is not \K, the authors use the structure theorem of \cite{s07} to obtain an explicit description of its Lie algebra (being a quotient of the real Heisenberg group).
\end{remark}

\begin{remark}
However, the same compact nilmanifold can support a $k$-Gauduchon and a balanced metric compatible with the same invariant complex structure, \cite{lu17}. On the other hand, \ref{oos_main} is not true without the assumption of left-invariance for the $k$-Gauduchon metric, \cite[Remark 6]{oos23}.
\end{remark}

\medskip

A bit later, another result was proved for all compact Vaisman manifolds:

\begin{theorem}\textnormal{(\cite[Theorem 3.1]{alexdaniele}}
	If $M$ is a compact Vaisman manifold of complex dimension $n$:
	\begin{enumerate}[(i)]
		\item $M$ cannot admit a positive $(1, 1)$-form $\omega$ satisfying $dd^c \omega^k = 0$ for some $1 \le k \le n - 2$. In particular, if $n \ge 3$, it cannot admit $k$-Gauduchon or Hermitian-symplectic metrics.
		\item $M$ cannot admit positive closed $(p, p)$-forms, for any $1 \le p \le n-1$. In particular, it cannot admit balanced metrics.
	\end{enumerate}
\end{theorem}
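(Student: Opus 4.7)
The plan is to isolate the ``transverse K\"ahler form'' of the Vaisman structure and to use its double exactness (both under $d$ and $d^c$) to engineer a test integral that must simultaneously be strictly positive and zero.

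First I would introduce $\omega_0 := \omega - \theta\wedge\theta^c$, with $\theta$ the parallel Lee form normalized so that $|\theta|=1$. From $\nabla\theta=0$ and the LCK equation $d\omega=\theta\wedge\omega$ one verifies that $\omega_0$ is a real closed semi-positive $(1,1)$-form of constant rank $2n-2$, whose two-dimensional kernel is the Lee distribution $\langle\theta^\sharp, J\theta^\sharp\rangle$. The central identity, equivalent to the Vaisman condition, is
\[
\omega_0 = -d\theta^c = d^c\theta,
\]
so that $\omega_0$ admits \emph{two} global primitives. Combined with $d^c\theta^c=0$ (which follows from $d\theta=0$), this produces the key computation $dd^c(\theta\wedge\theta^c)=\omega_0^2$; wedging by $\omega_0^{m-2}$ and using that $\omega_0$ is both $d$- and $d^c$-closed then yields the exactness lemmas
\[
\omega_0^m = d\bigl(-\theta^c\wedge\omega_0^{m-1}\bigr) \text{ for } 1\le m\le n-1,\qquad \omega_0^m = dd^c\bigl(\theta\wedge\theta^c\wedge\omega_0^{m-2}\bigr) \text{ for } 2\le m\le n-1.
\]

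For part (ii), given a closed positive $(p,p)$-form $\Omega$ with $1\le p\le n-1$, I would analyze the test integral $I := \int_M \Omega\wedge\omega_0^{n-p}$. In local unitary coordinates adapted so that $\partial_{z_1}$ spans the Lee complex direction, $\omega_0^{n-p}$ is a strictly positive transverse volume form in $z_2,\dots,z_n$, while the coefficient of $i\,dz_1\wedge d\bar z_1$ in $\Omega$ is a strongly positive transverse $(p-1,p-1)$-form (by strong positivity of $\Omega$), so $\Omega\wedge\omega_0^{n-p}$ is a strictly positive top form and $I>0$. But the $d$-exactness of $\omega_0^{n-p}$ together with $d\Omega=0$ forces $I=0$ by Stokes --- contradiction. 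A balanced metric $\omega_b$ in particular provides such a form via $\Omega=\omega_b^{n-1}$, so is excluded.

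For part (i), the same test integral $I:=\int_M(\omega')^k\wedge\omega_0^{n-k}$ is strictly positive by the identical pointwise argument; meanwhile, since $n-k\ge 2$ the $dd^c$-exactness above applies, and double integration by parts yields
\[
I = \int_M (\omega')^k\wedge dd^c\bigl(\theta\wedge\theta^c\wedge\omega_0^{n-k-2}\bigr) = \int_M dd^c(\omega')^k\wedge(\theta\wedge\theta^c\wedge\omega_0^{n-k-2}) = 0,
\]
a contradiction with $I>0$. A Hermitian-symplectic structure falls into (i) with $k=1$ for $n\ge 3$, since decomposing $d\omega^{\mathrm{sym}}=0$ by bidegree forces $\del\delb(\omega^{\mathrm{sym}})^{(1,1)}=0$. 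The main obstacle throughout is the verification of the Vaisman identity $d\theta^c=-\omega_0$, which encodes the parallelism of $\theta$; once that is in place, the rest is Stokes and a pointwise positivity check. The weaker $k$-Gauduchon hypothesis $dd^c(\omega')^k\wedge(\omega')^{n-k-1}=0$ is not immediately captured by this test form and would require, in addition, averaging $\omega'$ along the Lee flow (exploiting that the Lee field is holomorphic and Killing on a Vaisman manifold) in order to reduce to the stronger pointwise vanishing $dd^c(\omega')^k=0$.
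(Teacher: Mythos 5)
The survey states this theorem only with a citation to \cite{alexdaniele} and gives no proof of its own, so I am measuring your proposal against the argument of the cited source. Your core mechanism is the right one and is the same as theirs: normalize $|\theta|=1$, set $\omega_0=\omega-\theta\wedge\theta^c$, use the Vaisman identities $d\theta=0$ and $\omega_0=d^c\theta=-d\theta^c$ to conclude that $\omega_0$ is a closed, semi-positive $(1,1)$-form of rank $2n-2$ whose powers $\omega_0^m$ are $d$-exact for $m\le n-1$ and $dd^c$-exact for $2\le m\le n-1$, and then play the strict pointwise positivity of $\Omega\wedge\omega_0^{n-p}$ (resp.\ $(\omega')^k\wedge\omega_0^{n-k}$) against the vanishing of its integral by Stokes. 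The double integration by parts $\int_M\alpha\wedge dd^c\beta=\int_M dd^c\alpha\wedge\beta$ is valid on a compact manifold, the degree bookkeeping ($n-k\ge 2$ for the $dd^c$-primitive, $n-p\ge 1$ for the $d$-primitive) is correct, the pointwise positivity check is sound because $\omega_0^{n-p}$ is a nonzero strongly positive form paired with a strictly positive $(p,p)$-form, and the reduction of the Hermitian-symplectic case to $k=1$ via the bidegree decomposition of $d\omega=0$ is correct. (Your sign $dd^c(\theta\wedge\theta^c)=\omega_0^2$ comes out as $-\omega_0^2$ in the conventions of this paper, but only exactness up to sign is used, so this is immaterial.)

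The one genuine gap is the $k$-Gauduchon clause of (i), which you have correctly located but not repaired. The $k$-Gauduchon hypothesis $dd^c(\omega')^k\wedge(\omega')^{n-k-1}=0$ is strictly weaker than the pointwise vanishing $dd^c(\omega')^k=0$, so it is not literally covered by the main clause, and your test form $\omega_0^{n-k}$ does not detect it. The fix you propose --- averaging $\omega'$ along the Lee flow to reduce to $dd^c(\omega')^k=0$ --- fails for two reasons: the $k$-Gauduchon condition is not linear (nor convex) in $\omega'$, so it is not preserved under averaging over the torus generated by the Lee and anti-Lee flows; and even a torus-invariant $k$-Gauduchon metric has no reason to satisfy the pointwise equation $dd^c(\omega')^k=0$. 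A complete proof must therefore either supply a separate argument tailored to the trace condition, as the cited source does, or restrict the ``in particular'' of (i) to the classes (pluriclosed, astheno-K\"ahler, Hermitian-symplectic) for which the pointwise vanishing $dd^c\omega^k=0$ genuinely holds. Everything else in your proposal --- in particular all of part (ii) and the exclusion of pluriclosed, astheno-K\"ahler and Hermitian-symplectic metrics in part (i) --- is correct as written.
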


\medskip

\begin{remark}
	It was proved in \cite{ca20} that the only Calaby--Eckmann manifolds admitting non-K\"ahler pluriclosed metrics are $S^1\times S^3$ and $S^3\times S^3$. However, this does not contradict the conjecture that LCK and pluriclosed metrics cannot coexist, compatible with the same complex structure, since the first example is a surface and the second is simply connected.
\end{remark}

\section{Stability at blow-up and deformations}

We end the survey with a short collection of results showing the compatibility of the various types of non-\K \ metrics with a classical and useful operation in \K \ geometry, namely the blow-up. We also reproduce the known results about their stability under deformations.

\subsection{Blowing-up}  Regarding the blow-up of LCK manifolds, we have the following complete characterization:

\begin{theorem}\textnormal{(\cite[Theorem 2.8, Theorem 2.9]{ovv13}} If $(M, \omega, \theta)$ is an LCK manifold and $Y \subset M$ a  compact complex submanifold, then the blow-up of $M$ centered in $Y$ is LCK if and only if $\theta$ is exact on $Y$ \ie the metric $\omega$ is globally conformally \K \ when restricted to $Y$.
\end{theorem}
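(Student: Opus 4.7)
My plan is to treat the two directions separately.

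$(\Leftarrow)$ Suppose $\theta|_Y$ is exact. A tubular neighbourhood $U$ of $Y$ deformation retracts onto $Y$, so $H^1(U,\mathbb{R}) \cong H^1(Y,\mathbb{R})$ and $\theta|_U = df$ for some $f \in C^\infty(U)$. Then $\omega_K := e^{-f}\omega$ is K\"ahler on $U$, and I would apply the classical K\"ahler blow-up construction (Blanchard, Voisin) along $Y$ to produce a K\"ahler form $\hat\omega_K$ on $\hat U := \pi^{-1}(U)$ agreeing with $\pi^*\omega_K$ outside a small neighbourhood of the exceptional divisor $E := \pi^{-1}(Y)$. Defining $\hat\omega := e^{\pi^*f}\hat\omega_K$ on $\hat U$ and $\hat\omega := \pi^*\omega$ on $\hat M\setminus \hat U$, the two prescriptions agree on the overlap (where $\hat\omega_K = \pi^*\omega_K = e^{-\pi^*f}\pi^*\omega$), giving a global LCK metric on $\hat M$ with Lee form $\pi^*\theta$.

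$(\Rightarrow)$ Suppose $\hat M$ admits an LCK metric $\hat\omega$ with Lee form $\hat\theta$. The exceptional divisor $p : E \to Y$ is a $\mathbb P^{k}$-bundle with $k = \codim_\mathbb{C}(Y,M)-1$, and each fibre $F_y \cong \mathbb P^k$ is a compact K\"ahler submanifold of $\hat M$. Restricting the LCK metric to $F_y$ gives an LCK metric on a compact K\"ahler-type manifold, so by Vaisman's theorem $\hat\omega|_{F_y}$ is conformally K\"ahler, and hence $\hat\theta|_{F_y}$ is exact. By Leray--Hirsch applied to the projective bundle $p$, $H^1(E,\mathbb{R}) \cong p^*H^1(Y,\mathbb{R})$, so $[\hat\theta|_E] = p^*[\alpha]$ for a unique $[\alpha] \in H^1(Y,\mathbb{R})$. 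Since $\codim_\mathbb{C} Y \ge 2$ forces $\pi_1(\hat M) \cong \pi_1(M)$ and the universal K\"ahler cover of $\hat M$ is the blow-up of that of $M$ along the preimage of $Y$, comparing the homothety characters of the two deck-group actions yields $[\hat\theta] = \pi^*[\theta]$ in $H^1(\hat M, \mathbb{R})$. Restricting to $E$ and using $\pi\circ\iota_E = \iota_Y\circ p$ gives $p^*[\theta|_Y] = p^*[\alpha]$; injectivity of $p^*$ then yields $[\theta|_Y] = [\alpha]$. A last step combining the fibrewise Vaisman information with the positivity of $\hat\omega|_E$ on the $\mathbb P^k$-bundle $E$ forces $[\alpha] = 0$, giving $[\theta|_Y] = 0$.

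The main technical obstacle is the cohomological identification $[\hat\theta] = \pi^*[\theta]$ in the converse direction: a priori distinct LCK structures on a given complex manifold can realise distinct Lee classes, so one must rule out \emph{every} LCK metric on $\hat M$, not merely those built from $\omega$ itself. This rigidity is precisely what the codimension hypothesis together with the structure of the blown-up K\"ahler universal cover provide. By contrast, the forward direction reduces to a rather standard patching of the classical K\"ahler blow-up with a conformal modification, and both the fibrewise Vaisman step and the Leray--Hirsch calculation are routine once the cohomological identification is in place.
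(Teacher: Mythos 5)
The survey itself does not prove this statement; it is quoted from \cite{ovv13}, so I am comparing your outline with the proof given there. Your forward direction is essentially the argument of \cite[Theorem 2.8]{ovv13} and is sound: the tubular neighbourhood $U$ retracts onto $Y$, so $[\theta|_Y]=0$ gives $\theta|_U=df$, the conformal rescaling $e^{-f}\omega$ is K\"ahler on $U$, and the standard K\"ahler blow-up form (equal to $\pi^*(e^{-f}\omega)$ away from the exceptional divisor) can be rescaled back by $e^{\pi^*f}$ and glued to $\pi^*\omega$, producing an LCK metric with Lee form $\pi^*\theta$.

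The converse direction has a genuine gap, and you have in fact pointed at it yourself: the identification $[\hat\theta]=\pi^*[\theta]$ is not merely unproved in your write-up, it is false as a general claim. The Lee class is an invariant of the chosen LCK metric, not of the underlying complex manifold; a fixed compact complex manifold can carry LCK metrics realizing many distinct Lee classes (already on Hopf surfaces the realized Lee classes form more than a point), and ``the blown-up K\"ahler universal cover'' cannot pin $[\hat\theta]$ down, since the K\"ahler cover of $(\widetilde{M},\hat\omega)$ is itself determined by $\hat\theta$ -- the very object you are trying to control. Your final step forcing $[\alpha]=0$ is likewise only asserted. The proof of \cite[Theorem 2.9]{ovv13} avoids any comparison of Lee classes. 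One restricts $\hat\omega$ to the exceptional divisor $E=\mathbb{P}(N_{Y/M})$, an LCK manifold fibering over $Y$ with fibers $\mathbb{P}^k$, $k\ge 1$. On the minimal K\"ahler cover of $E$ the deck group permutes the lifted fibers (which are simply connected, hence lift, and are mutually homologous) while scaling the K\"ahler form by the homothety character $\chi(\gamma)$; since the volume of a fiber with respect to a closed form depends only on its homology class, one gets $\chi(\gamma)^{k}=1$, hence $\chi\equiv 1$, so $E$ is globally conformally K\"ahler and in particular of K\"ahler type. Fiber integration of $\omega_E^{\,k+1}$ along the proper submersion $p\colon E\to Y$ then shows that $Y$ is of K\"ahler type, and Vaisman's theorem applied to the \emph{original} induced structure $(Y,\omega|_Y,\theta|_Y)$ -- a compact LCK manifold of K\"ahler type -- gives that $\theta|_Y$ is exact. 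This is the mechanism your outline is missing: the conclusion about $\theta|_Y$ comes from Vaisman's theorem on $Y$ itself, not from transporting the Lee class of the new metric on $\widetilde{M}$ back to $M$.
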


\bigskip

As for the other types of metrics, we collect the known positive results in one theorem:

\begin{theorem} Let $(M, J)$ be a complex manifold, $Y \subset M$ a smooth complex submanifold and $\omega$ a $2$-form on $M$. Denote by $\widetilde{M}$ the blow-up of $M$ along $Y$. Then:
	\begin{enumerate}[i)]
		\item \cite[Proposition 3.2]{ft09} If $\omega$ is pluriclosed, then $\widetilde{M}$ also admits a pluriclosed metric.
		\item \cite[Theorem 4.5]{cristi_deformari} More generally, if, for some $k > 0$, $\omega$ satisfies $\del \delb \omega^j = 0$ for $1 \le j \le k$, then $\widetilde{M}$ also admits a Hermitian metric with the same property.
		\item \cite[Theorem 4.7]{ft09} If $Y$ is a point, then $M$ admits a pluriclosed metric if and only if $\widetilde{M}$ admits a pluriclosed metric.
		\item \cite[Proposition 3.2]{y15} If $\omega$ is Hermitian-symplectic, then $\widetilde{M}$ also admits a Hermitian-symplectic metric.
		\item \cite[Proposition 4.7]{alexdaniele} If $Y$ is a point, then $M$ admits a Hermitian-symplectic metric if and only if $\widetilde{M}$ admits a Hermitian-symplectic metric.
		\item \cite[Proposition 4.5]{alexdaniele} If $\omega$ is LCS taming $J$ and $\omega_{|Y}$ is globally conformally Hermitian-symplectic, then $\widetilde{M}$ also admits a LCS structure taming the induced complex structure.
	\end{enumerate}
\end{theorem}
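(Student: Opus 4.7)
The six items share a common skeleton based on the classical Kähler blow-up construction, so my plan is to explain that unified strategy first and then identify where each item requires additional input. Let $\pi : \widetilde{M} \to M$ be the blow-up and $E = \pi^{-1}(Y)$ the exceptional divisor. The pullback $\pi^{*}\omega$ is a nonnegative $(1,1)$-form on $\widetilde{M}$ that degenerates along $E$; I would correct it by a form $\eta$ built from a Hermitian metric on the line bundle $\mathcal{O}_{\widetilde{M}}(-E)$ cut off to a tubular neighborhood of $E$. For $\epsilon > 0$ small, $\widetilde{\omega} := \pi^{*}\omega + \epsilon\,\eta$ is then a Hermitian (or at least a positive-$(1,1)$-part) form on $\widetilde{M}$; the game is to choose $\eta$ so that it carries the specific differential condition of the ambient class.

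For items (i) and (ii), the pullback already satisfies $\del\delb(\pi^{*}\omega)^j = \pi^{*}\del\delb\omega^j = 0$ for every $j$ in the relevant range. The correction $\eta$ is locally $\del\delb$ of a smooth plurisubharmonic function multiplied by a cutoff, so it is $\del\delb$-closed except on an annular region where the cutoff has variable derivative; one then absorbs the error by subtracting a further $\del\delb$-exact term produced by the local $\del\delb$-lemma. Expanding $(\pi^{*}\omega + \epsilon\,\eta)^j$ and applying $\del\delb$ produces a finite sum of terms $\del\delb(\pi^{*}\omega^a \wedge \eta^b)$ which all vanish by the hypothesis combined with bidegree bookkeeping, giving (ii); the case $k=1$ is (i).

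Items (iv) and (vi) follow the same template with closedness replacing $\del\delb$-closedness. Choose $\eta$ to be the curvature of a Chern connection on $\mathcal{O}_{\widetilde{M}}(-E)$; then $\eta$ is closed, its $(1,1)$-part is positive near $E$, and $\pi^{*}\omega + \epsilon\,\eta$ is closed with positive $(1,1)$-part for $\epsilon$ small, proving (iv). For (vi), pulling back the LCS equation gives $d(\pi^{*}\omega) = (\pi^{*}\theta)\wedge \pi^{*}\omega$; the hypothesis that $\theta|_Y$ is exact produces a primitive $f$ of $\pi^{*}\theta$ in a neighborhood of $E$, so $e^{-\pi^{*}f}\pi^{*}\omega$ is locally symplectic and one applies the Hermitian-symplectic construction there before conformally gluing back via a cutoff.

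The two \emph{converse} statements (iii) and (v) are the main obstacle, because the direction is now from $\widetilde{M}$ down to $M$, and one cannot simply push forward a form through $\pi$. My plan here is to exploit that $Y$ is a point: $\pi$ restricts to an isomorphism $\widetilde{M}\setminus E \cong M\setminus\{p\}$, and $E \cong \mathbb{P}^{n-1}$. Given a pluriclosed $\widetilde{\omega}$ on $\widetilde{M}$, I would modify it in a tubular neighborhood of $E$ to make it $\pi$-basic there, using the local $\del\delb$-lemma on the blown-down ball to kill the obstruction — the key cohomological input is that the relevant Aeppli class on $\mathbb{P}^{n-1}$ is tame enough for a global primitive to exist. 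The Hermitian-symplectic case (v) is analogous with the Poincaré lemma in place of $\del\delb$-lemma. The delicate point is gluing across $E$ while preserving both positivity and the closedness condition, and this is exactly the step where extending beyond a point blow-up fails.
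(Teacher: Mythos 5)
First, a caveat: the survey offers no proof of this theorem at all --- it is a list of results quoted from the cited sources (FT09, C24, Y15, AO23) --- so your sketch can only be measured against those original arguments. For the forward directions i), ii), iv), vi) your skeleton is the standard one: pull back $\omega$ and add $\epsilon\eta$, where $\eta$ is the classical correction form concentrated near $E$ and positive on the fibres of $E\to Y$. One correction, though: in the usual construction $\eta$ is \emph{globally} closed --- it is the curvature of a Hermitian metric on $\mathcal{O}_{\widetilde M}(-E)$, equivalently globally of the form $dd^c(\chi\varphi)$ --- so $\del\delb\eta=0$ identically, including on the annulus where the cutoff varies. There is no error to absorb by a local $\del\delb$-lemma; what fails on the annulus is only positivity of $\eta$, and that is handled by compactness of $Y$ (which you must assume; the cited results do) together with a small $\epsilon$, since $\pi^*\omega$ is strictly positive there. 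With $\eta$ a closed $(1,1)$-form one has $\del\eta=\delb\eta=0$, and your expansion of $\del\delb(\pi^*\omega+\epsilon\eta)^j$ then collapses as you claim, giving i) and ii); item iv) is the same with $d$ in place of $\del\delb$, and vi) follows the OVV13/AO23 pattern of twisting $\eta$ by $e^{\pi^*f}$ where $df=\theta$ on a tubular neighbourhood of $Y$ (this exists precisely because $\theta|_Y$ is exact).

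The genuine gap is in the converses iii) and v), which you acknowledge are the main obstacle but for which you only offer a plan, and the plan is misdirected. After identifying $\widetilde M\setminus E$ with $M\setminus\{p\}$, the problem does not live ``across $E$'' on $\widetilde M$: it is to modify the induced $dd^c$-closed (resp.\ closed) positive form on a punctured ball $B\setminus\{p\}$ so that it extends smoothly and positively across $p$. The cohomological input is therefore a vanishing statement for the \emph{punctured ball} --- $H^{2}_{dR}(B\setminus\{0\})=0$ for $n\ge 2$ in the Hermitian-symplectic case, and the analogous Aeppli-type exactness on $B\setminus\{0\}$ in the pluriclosed case --- which lets one write the form near $p$ as $\del\bar u+\delb u$ (resp.\ $d\alpha$), cut off the potential, and glue in a large multiple of the flat form $dd^c|z|^2$ while keeping positivity on the gluing annulus. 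The Aeppli cohomology of $\mathbb{P}^{n-1}$, which you invoke, is the wrong object: it is nonzero (it contains the Fubini--Study class, and the restriction to $E$ of any Hermitian metric on $\widetilde M$ represents a nontrivial class), so no global primitive on $E$ exists, nor is one needed. As written, iii) and v) are not established.
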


\subsection{Stability under deformations} We collect known results in two theorems, starting first with positive results:

\begin{theorem} Let $(M_t, J_t)$ be a family of compact complex manifolds, $t \in (-1, 1)$.
	\begin{enumerate}[i)]
		\item \cite[Theorem 2.6]{ov10} If $(M_0, J_0)$ is an LCK manifold \textit{with potential}, then for a small enough $\epsilon > 0$, $(M_t, J_t)$ is also LCK with potential. However, \cite{bel00}, the whole class of LCK manifolds, and even the class of Vaisman manifolds, are not stable under small deformations.
		\item \cite[Proposition 4.6]{alexdaniele} If $(M_0, J_0)$ admits an LCS form $\omega_0$ taming the complex structure $J_0$, then for a small enough $\epsilon > 0$, $(M_t, J_t)$ also admits an LCS form $\omega_t$ taming $J_t$.
	\end{enumerate}
\end{theorem}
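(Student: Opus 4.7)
For part (i), I would use the automorphic K\"ahler potential of $(M_0, J_0)$ directly as a potential for the deformed complex structures, exploiting the fact that a deformation keeps the underlying smooth manifold and the deck action fixed. Let $\tilde M$ denote the universal cover of $M$ (independent of $t$ as a smooth manifold) and $\chi \colon \pi_1(M) \to \R^{>0}$ the character such that the LCK potential $\phi \colon \tilde M \to \R^{>0}$ satisfies $\gamma^* \phi = \chi(\gamma) \phi$ and $\tilde \omega_0 = dd^c_{J_0} \phi$ is K\"ahler. The candidate for the deformed structure is $\tilde \omega_t := dd^c_{J_t} \phi$, with the same function $\phi$ and the same character $\chi$. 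Since the deck group acts by diffeomorphisms that are automatically holomorphic with respect to the lifted complex structure $\tilde J_t$ (both being pulled back from $M$), the automorphy $\gamma^* \tilde \omega_t = \chi(\gamma) \tilde \omega_t$ persists for every $t$.

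It then remains to verify positivity of $\tilde \omega_t$. Fixing a compact fundamental domain $K \subset \tilde M$, the form $dd^c_{J_t} \phi$ depends continuously on $t$ in the $C^\infty$ topology on $K$, and is positive at $t = 0$. Since positivity of Hermitian forms is an open condition on compact sets, $\tilde \omega_t > 0$ on $K$ for $|t| < \epsilon$, and $\chi$-automorphy extends this to all of $\tilde M$, producing the desired LCK-with-potential structure on $(M_t, J_t)$.

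For part (ii), the argument is almost formal: I would set $\omega_t := \omega_0$ and $\theta_t := \theta_0$ for all $t$. The LCS condition $d\omega_0 = \theta_0 \wedge \omega_0$ with $d\theta_0 = 0$ does not reference any complex structure, so it continues to hold verbatim. The only condition depending on $J_t$ is the positivity of the $(1,1)_{J_t}$-part of $\omega_0$, which is positive at $t = 0$ by hypothesis; since this component depends continuously on the complex structure and $M$ is compact, positivity persists for small $|t|$. In both parts the main point is the openness of positivity on a compact set, and the only substantive observation is that in (i) the automorphic potential $\phi$ may be kept fixed as $J_t$ varies, which is immediate from $\phi$ being a smooth function on the smooth universal cover, independent of complex structure.
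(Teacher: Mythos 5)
This is a survey, so the paper itself gives no proof of this theorem, only citations to \cite{ov10} and \cite{alexdaniele}; your argument reproduces essentially the standard proofs from those sources. For (i), keeping the automorphic potential $\phi$ fixed, setting $\tilde\omega_t = dd^c_{J_t}\phi$, and using $\chi$-automorphy to reduce positivity to an open condition on a compact fundamental domain is exactly the mechanism of \cite[Theorem 2.6]{ov10} (with the standard final step that $\tilde\omega_t/\phi$ is deck-invariant and descends to an LCK metric on $M_t$); for (ii), the observation that the LCS equation is $J$-independent and only the taming (positivity of the $(1,1)_{J_t}$-part) must be checked, which is open on a compact manifold, is precisely \cite[Proposition 4.6]{alexdaniele}.
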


\medskip

On the other hand, there exist several similar results proving obstructions to stability under deformations:

\begin{theorem} Let $(M, J)$ be a compact complex manifold endowed with a Hermitian metric $\omega$ and $(M, J_t, \omega_t)_{t \in (-\epsilon, \epsilon)}$  a differentiable family of compact complex manifolds with Hermitian metrics $\omega_t$ such that $M_0 = M$ and $\omega_0 = \omega$, parametrized by the $(0, 1)$-vector form $\varphi(t)$ on $M$ (see \textit{e.g.} \cite{sf22} for details). Then:
	\begin{enumerate}[i)]
		\item \cite[Theorem 4.1, Corollary 4.2]{sf22} If all $\omega_t$ are balanced, then $\del \circ i_{\varphi'(0)} \left( \omega^{n-1} \right) = - \delb \left( \omega^{n-1}(0) \right)'$. In particular, $\left[ \del \circ i_{\varphi'(0)} \left( \omega^{n-1} \right)  \right]_{H^{n-1, n}_{\delb} (M)} = 0 $.
		\item \cite[Theorem 1.1, Corollary 1.2]{ps21} If all $\omega_t$ are pluriclosed, then $2i \Im \left( \del \circ i_{\varphi'(0)} \circ \del \right) (\omega) = \del \delb \omega'(0)$. In particular, $\left[ \Im \left( \del \circ i_{\varphi'(0)} \circ \del \right) (\omega)  \right]_{H^{2, 2}_{BC} (M)} = 0$.
		\item \cite[Theorem 4.1, Corollary 4.2]{sf23} If all $\omega_t$ are astheno-\K, then $2i \Im \left( \del \circ i_{\varphi'(0)} \circ \del \right) (\omega^{n-2}) = \del \delb ( \omega^{n-2} )'(0)$. In particular, $\left[ \Im \left( \del \circ i_{\varphi'(0)} \circ \del \right) (\omega^{n-2})  \right]_{H^{n-1, n-21}_{BC} (M)} = 0$.
		\item \cite[Theorem 3.21]{cristi_deformari} More generally than the previous two, if, for some $k$, all $\omega_t$ satisfy $\del_t \delb_t \omega_t^k = 0$, then $2i \Im \left( \del \circ i_{\varphi'(0)} \circ \del \right) (\omega^{k}) = \del \delb ( \omega^{k} )'(0)$.
	\end{enumerate}
\end{theorem}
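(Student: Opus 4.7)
The four parts share a common strategy: rephrase the defining equation so that its $J_t$-dependence is isolated in type-projection operators while the exterior derivative $d$ (which does not depend on the complex structure) does the differential work; then decompose everything in $J_0$-types, differentiate at $t = 0$, and apply the first-order Kodaira--Spencer expansion to identify the derivatives of the off-diagonal $J_0$-components in terms of $\varphi'(0)$.

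I sketch (i), which is the cleanest. The real form $\omega_t^{n-1}$ is of $J_t$-type $(n-1, n-1)$; decomposed in $J_0$-types it becomes
\[ \omega_t^{n-1} = \Omega_t^{(n-1,n-1)} + \Omega_t^{(n,n-2)} + \Omega_t^{(n-2,n)}, \]
with $\Omega_0^{(n-1,n-1)} = \omega^{n-1}$ and $\Omega_0^{(n,n-2)} = \Omega_0^{(n-2,n)} = 0$. Splitting $d\omega_t^{n-1} = 0$ by $J_0$-type produces two conjugate relations; the $(n-1,n)$-component reads $\delb\, \Omega_t^{(n-1,n-1)} + \del\, \Omega_t^{(n-2,n)} = 0$, which, differentiated at $t = 0$, becomes
\[ \delb\bigl( \Omega^{(n-1,n-1)}\bigr)'\big|_0 + \del\bigl(\Omega^{(n-2,n)}\bigr)'\big|_0 = 0. \]
The Kodaira--Spencer expansion gives $\bigl(\Omega^{(n-2,n)}\bigr)'|_0 = \pm\, i_{\varphi'(0)}(\omega^{n-1})$, which one verifies by expressing $\omega_t$ in the $J_t$-type $(1,0)$-coframe $\beta^a = \alpha^a - i_{\varphi(t)}\alpha^a$ (for a $J_0$-type $(1,0)$-coframe $\alpha^a$), expanding $\omega_t^{n-1}$, and using that $i_{\varphi}$ is a derivation on the exterior algebra. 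Substituting yields the stated identity, and vanishing of the Dolbeault class in $H^{n-1,n}_{\delb}(M)$ is automatic since the right-hand side is manifestly $\delb$-exact.

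For parts (ii), (iii), (iv), which concern $\del_t \delb_t \omega_t^k = 0$, the same principle applies after observing that $\del_t \delb_t \omega_t^k = d\bigl(\pi^{(k, k+1)}_{J_t}(d\omega_t^k)\bigr)$, since $\delb_t$ of a $J_t$-type $(k, k)$-form is the $(k, k+1)_{J_t}$-component of its $d$-differential. This isolates the $J_t$-dependence inside a single type-projection. Differentiating at $t = 0$ and applying the Kodaira--Spencer expansion both to $\omega_t^k$ and to $\pi^{(k, k+1)}_{J_t}$ (which at first order brings in contractions with $\varphi'(0)$), then separating real and imaginary parts, yields the claimed identity $2i\, \Im(\del \circ i_{\varphi'(0)} \circ \del)(\omega^k) = \del\delb(\omega^k)'(0)$, from which the Bott--Chern class vanishing is immediate.

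The principal obstacle is the careful Kodaira--Spencer bookkeeping: correctly expanding the $J_t$-type projections to first order in $\varphi'(0)$, applying the derivation property of $i_\varphi$ to the powers $\omega_t^k$, and tracking signs consistently (a subtle matter, depending on convention). These computations are routine but delicate; they are executed in detail in \cite{sf22, ps21, sf23, cristi_deformari} for the respective cases.
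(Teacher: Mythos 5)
The paper does not prove this theorem: it is a survey item, stated with citations to [Sf22], [PS21], [Sf23] and [C24], followed only by the remark that ``the proofs are rather technical, but direct computations following the definitions.'' So there is no internal argument to compare yours against; the relevant comparison is with the cited sources, and your sketch is a faithful outline of their method. Your reduction in (ii)--(iv), namely $\del_t\delb_t\omega_t^k = d\bigl(\pi^{(k,k+1)}_{J_t}(d\omega_t^k)\bigr)$, is correct (it uses $\delb_t^2=0$ to absorb the second projection into $d$), and your type-decomposition of $d\omega_t^{n-1}=0$ in part (i) correctly isolates the $(n-1,n)$-component $\delb\,\Omega_t^{(n-1,n-1)}+\del\,\Omega_t^{(n-2,n)}=0$, whose derivative at $t=0$ gives the stated identity once $\bigl(\Omega^{(n-2,n)}\bigr)'|_0$ is identified with $\pm\,i_{\varphi'(0)}(\omega^{n-1})$. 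That identification, and its analogues for the projections $\pi^{(k,k+1)}_{J_t}$ in (ii)--(iv), is precisely where all the content of the cited theorems sits, and you assert it rather than derive it; as a self-contained proof this is therefore incomplete, but you acknowledge the gap explicitly and defer to the same references the paper itself relies on, so your proposal is at the same level of rigour as the survey's own treatment.
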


In all these cases, the proofs are rather technical, but direct computations following the definitions, and the specific examples where the obstructions are at work are obtained on nilmanifolds of small dimension.
	
{\small

}

\hfill

{\scriptsize

	\noindent {\sc Liviu Ornea, \ Miron Stanciu\\
		University of Bucharest, Faculty of Mathematics and Informatics, \\14
		Academiei str., 70109 Bucharest, Romania, \\
		also:\\
		Institute of Mathematics ``Simion Stoilow" of the Romanian
		Academy\\
		21, Calea Grivitei Str.
		010702-Bucharest, Romania}\\
	\tt lornea@fmi.unibuc.ro,   liviu.ornea@imar.ro\\
	\tt miron.stanciu@fmi.unibuc.ro,   miron.stanciu@imar.ro
}
	

\begin{thebibliography}{100}
	
	\bibitem[AB93]{ab93} 
	L. Alessandrini, G. Bassanelli, 
	{\em Metric properties of manifolds bimeromorphic to compact
		K\"ahler spaces}, J. Differ. Geom. {\bf 37} (1993), 95–121.
	
	\bibitem[AI03]{ai03} 
	B. Alexandrov, S. Ivanov, {\em Vanishing theorems on Hermitian manifolds}, Differ. Geom. Appl. {\bf 14} (2001), 251–265.
	
	\bibitem[AT23]{at23} 
	A. Andrada, A. Tolcachier, {\em Harmonic complex structures and special Hermitian metrics on products of Sasakian manifolds}, arXiv:2301.09706v2.
	
	\bibitem[AO23]{alexdaniele} 
	D. Angella, A. Otiman, {\em A note on compatibility of special Hermitian structures}, arXiv:2306.02981v2.
	
	\bibitem[AD23]{ad23} 
	V. Apostolov, G. Dloussky, {\em Twisted differentials and Lee classes of locally conformally symplectic complex surfaces}, Math. Z. {\bf 303} (2023), Art. 76.
	
	\bibitem[Bel00]{bel00} 
	F. A. Belgun, {\em On the metric structure of non-K\"ahler complex surfaces}, Math. Ann. {\bf 317} (2000), 1–40.
	
	\bibitem[Bis89]{bis89} 
	J.-M. Bismut, {\em A local index theorem for non-K\"ahler manifolds}, Math. Ann. {\bf 284} (1989), 681–699.
	
	\bibitem[Br11]{br11} 
	M. Brunella, {\em Locally conformally K\"ahler metrics on Kato surfaces}, Nagoya Math. J. {\bf 202} (2011), 77–81.
	
	\bibitem[Ca20]{ca20} 
	G. Cavalcanti, {\em Hodge theory of SKT manifolds}, Adv. Math. {\bf 374} (2020), Art. 107270.
	
	\bibitem[COS24]{cos24} 
	C. Ciulic\u{a}, A. Otiman, M. Stanciu, {\em Special non-K\"ahler metrics on Endo-Pajitnov manifolds}, Ann. Mat. Pura Appl. (2024), https://doi.org/10.1007/s10231-024-01533-0.
	
	\bibitem[CRS22]{crs22} 
	I. Chiose, R. R\u{a}sdeaconu, I. \c{S}uvaina, {\em Balanced manifolds and SKT metrics}, Ann. Mat. Pura Appl. {\bf 201} (2022), 2505–2517.
	
	\bibitem[C24]{cristi_deformari} 
	C. Ciulic\u{a}, {\em Special Hermitian metrics}, arXiv:2411.02567 (2024).
	
	\bibitem[DP21]{dp21} 
	S. Dinew, D. Popovici, {\em A generalised volume invariant for Aeppli cohomology classes of Hermitian-symplectic metrics}, Adv. Math. {\bf 393} (2021), 108056.
	
	\bibitem[DP23]{dp23} 
	S. Dinew, D. Popovici, {\em A variational approach to SKT and balanced metrics}, J. Math. Pures Appl. {\bf 175} (2023), 237–268.
	
	\bibitem[Dl84]{dl84} 
	G. Dloussky, {\em Structure des surfaces de Kato}, M\'em. Soc. Math. France (N.S.) {\bf 14} (1984).
	
	\bibitem[EP20]{pajitnov1} 
	H. Endo, A. Pajitnov, {\em On generalized Inoue manifolds}, Proc. Int. Geom. Center {\bf 13} (2020), 24–39.
	
	\bibitem[FGV19]{fgv19} 
	A. Fino, G. Grantcharov, L. Vezzoni, {\em Astheno-K\"ahler and balanced structures on fibrations}, Int. Math. Res. Not. {\bf 2019} (2019), 7093–7117.
	
	\bibitem[FT09]{ft09} 
	A. Fino, A. Tomassini, {\em Blow-ups and resolutions of strong K\"ahler with torsion metrics}, Adv. Math. {\bf 221} (2009), 914–935.
	
	\bibitem[FT10]{ft10} 
	A. Fino, A. Tomassini, {\em On astheno-K\"ahler metrics}, J. Lond. Math. Soc. {\bf 83} (2011), 290–308.
	
	\bibitem[FV16]{fv16} 
	A. Fino, L. Vezzoni, {\em On the existence of balanced and SKT metrics on nilmanifolds}, Proc. Amer. Math. Soc. {\bf 144} (2016), 2455–2459.
	
	\bibitem[FWW13]{fww13} 
	J. Fu, Z. Wang, D. Wu, {\em Semilinear equations, the $\gamma_k$ function, and generalized Gauduchon metrics}, J. Eur. Math. Soc. {\bf 15} (2013), 659–680.
	
	\bibitem[Gau78]{gau78} 
	P. Gauduchon, {\em Le th\'eor\`eme de l’excentricité nulle}, C. R. Acad. Sci. Paris S\'er. A–B {\bf 285} (1977), A387–A390.
	
	\bibitem[Gau91]{gau91} 
	P. Gauduchon, {\em Structures de Weyl et th\'eorèmes d’annulation sur une vari\'et\'e conforme autoduale}, Ann. Scuola Norm. Sup. Pisa Cl. Sci. {\bf 18} (1991), 563–629.
	
	\bibitem[GH80]{gh80} 
	A. Gray, L. Hervella, {\em The sixteen classes of almost Hermitian manifolds and their linear invariants}, Ann. Mat. Pura Appl. {\bf 123} (1980), 35–58.
	
	\bibitem[Ino74]{inoue} 
	M. Inoue, {\em On surfaces of Class $VII_0$}, Invent. Math. {\bf 24} (1974), 269–310.
	
	\bibitem[IOP21]{iop21} 
	N. Istrati, A. Otiman, M. Pontecorvo, {\em On a class of Kato manifolds}, Int. Math. Res. Not. {\bf 2021} (2021), 5366–5412.
	
	\bibitem[IP13]{ip13} 
	S. Ivanov, G. Papadopoulos, {\em Vanishing theorems on $(l|k)$-strong K\"ahler manifolds with torsion}, Adv. Math. {\bf 237} (2013), 147–164.
	
	\bibitem[JY93]{jostyau} 
	J. Jost, S.-T. Yau, {\em A nonlinear elliptic system for maps from Hermitian to Riemannian manifolds and rigidity theorems in Hermitian geometry}, Acta Math. {\bf 170} (1993), 221–254.
	
	\bibitem[Kas13]{kas13} 
	H. Kasuya, {\em Vaisman metrics on solvmanifolds and Oeljeklaus–Toma manifolds}, Bull. Lond. Math. Soc. {\bf 45} (2013), 15–26.
	
	\bibitem[Kat77]{ka77} 
	Ma. Kato, {\em Compact complex manifolds containing global spherical shells}, Proc. Japan Acad. Ser. A Math. Sci. {\bf 53} (1977), 15–16.
	
	\bibitem[LU17]{lu17} 
	A. Latorre, L. Ugarte, {\em On non-K\"ahler compact complex manifolds with balanced and astheno-K\"ahler metrics}, C. R. Math. Acad. Sci. Paris {\bf 336} (2017), 90–93.
	
	\bibitem[Mal62]{m62} 
	A. I. Mal’cev, {\em On a class of homogeneous spaces}, Trans. Amer. Math. Soc. {\bf 9} (1962), 276–307.
	
	\bibitem[Mar24]{mar24} 
	V. Marchidanu, {\em Complex geometry and Hermitian metrics on the product of two Sasakian manifolds}, J. Geom. Phys. {\bf 199} (2024), Art. 105134.
	
	\bibitem[Mic82]{m82} 
	M. L. Michelsohn, {\em On the existence of special metrics in complex geometry}, Acta Math. {\bf 149} (1982), 261–295.
	
	\bibitem[OT05]{ot05} 
	K. Oeljeklaus, M. Toma, {\em Non-K\"ahler compact complex manifolds associated to number fields}, Ann. Inst. Fourier {\bf 55} (2005), 1291–1300.
	
	\bibitem[OOS23]{oos23} 
	L. Ornea, A. Otiman, M. Stanciu, {\em Compatibility between non-K\"ahler structures on non-complex (nil)manifolds}, Transform. Groups {\bf 28} (2023), 1669–1686.
	
	\bibitem[OV10]{ov10} 
	L. Ornea, M. Verbitsky, {\em Locally conformally K\"ahler manifolds with potential}, Math. Ann. {\bf 348} (2010), 25–33.
	
	\bibitem[OV24a]{ov24} 
	L. Ornea, M. Verbitsky, {\em Principles of locally conformally K\"ahler geometry}, Progress in Math. {\bf 354}, Birkh\"auser (2024).
	
	\bibitem[OV24b]{ov24b} 
	L. Ornea, M. Verbitsky, {\em The Lee–Gauduchon cone on complex manifolds}, arXiv:2411.05595.
	
	\bibitem[OV25]{ov25} 
	L. Ornea, M. Verbitsky, {\em Balanced metrics and Gauduchon cone of locally conformally K\"ahler manifolds}, Int. Math. Res. Not. rnaf014 (2025), https://doi.org/10.1093/imrn/rnaf014.
	
	\bibitem[OVV13]{ovv13} 
	L. Ornea, M. Verbitsky, V. Vuletescu, {\em Blow-ups of locally conformally K\"ahler manifolds}, Int. Math. Res. Not. {\bf 2013} (2013), 2809–2821.
	
	\bibitem[PS21]{ps21} 
	R. Piovani, T. Sferruzza, {\em Deformations of strong K\"ahler with torsion metrics}, Complex Manifolds {\bf 8} (2021), 286–301.
	
	\bibitem[Po13a]{po13} 
	D. Popovici, {\em Deformation limits of projective manifolds: Hodge numbers and strongly Gauduchon metrics}, Invent. Math. {\bf 194} (2013), 515–534.
	
	\bibitem[Po13b]{po13b} 
	D. Popovici, {\em Stability of strongly Gauduchon manifolds under modifications}, J. Geom. Anal. {\bf 23} (2013), 653–659.
	
	\bibitem[PrSt23]{ps23} 
	O. Preda, M. Stanciu, {\em Vaisman’s theorem for lcK spaces}, Ann. Sc. Norm. Super. Pisa Cl. Sci. {\bf 24} (2023), 2311–2321.
	
	\bibitem[PrSt24]{ps24} 
	O. Preda, M. Stanciu, {\em Vaisman’s theorem and local reducibility}, arXiv:2405.04162 (2024). To appear in T\^ohoku Math. J.
	
	\bibitem[Sa07]{s07} 
	H. Sawai, {\em Locally conformal K\"ahler structures on compact nilmanifolds with left-invariant complex structures}, Geom. Dedicata {\bf 125} (2007), 93–101.
	
	\bibitem[Se65]{s65} 
	J.-P. Serre, {\em Lie algebras and Lie groups: 1964 lectures given at Harvard University}, Springer (1965).
	
	\bibitem[Sf22]{sf22} 
	T. Sferruzza, {\em Deformations of balanced metrics}, Bull. Sci. Math. {\bf 178} (2022).
	
	\bibitem[Sf23]{sf23} 
	T. Sferruzza, {\em Deformations of astheno-K\"ahler metrics}, Complex Manifolds {\bf 10} (2023).
	
	\bibitem[Sh18]{sh18} 
	H. Shimobe, {\em Compact complex manifolds bimeromorphic to locally conformally Kähler manifolds}, Geom. Dedicata {\bf 197} (2018), 49–60.
	
	\bibitem[ST10]{st10} 
	J. Streets, G. Tian, {\em A parabolic flow of pluriclosed metrics}, Int. Math. Res. Not. {\bf 2010} (2010), 3101–3133.
	
	
	\bibitem[Vai80]{vai80} 
	I. Vaisman, {\em On locally and globally conformal Kähler manifolds}, Trans. Amer. Math. Soc. {\bf 262} (1980), 533–542.
	
	\bibitem[Ve14]{ve14} 
	M. Verbitsky, {\em Rational curves and special metrics on twistor spaces}, Geom. Topol. {\bf 18} (2014), 897–909.
	
	\bibitem[Y15]{y15} 
	S. Yang, {\em On blow-ups and resolutions of Hermitian-symplectic and strongly Gauduchon metrics}, Arch. Math. {\bf 104} (2015), 441–450.
	
	\bibitem[YZZ19]{yzz19} 
	S.-T. Yau, Q. Zhao, F. Zheng, {\em On Strominger Kähler-like manifolds with degenerate torsion}, arXiv:1908.05322v3.
	
\end{thebibliography}
\end{document}